\newcommand{\nwc}{\newcommand}
\nwc{\draftdate}{\today}
\newtheorem{theorem}{Theorem}[section]
\newtheorem{lemma}[theorem]{Lemma}
\newtheorem{remark}[theorem]{Remark}
\newtheorem{corollary}[theorem]{Corollary}
\newcommand{\disp}{\displaystyle}
\nwc{\qref}[1]{(\ref{#1})}
\newcommand{\beq}{\begin{equation}}
\newcommand{\eeq}{\end{equation}}
\def\XXint#1#2#3{{\setbox0=\hbox{$#1{#2#3}{\int}$}
\vcenter{\hbox{$#2#3$}}\kern-.51\wd0}}
\newcommand{\vep}{\varepsilon}
\newcommand{\cA}{{\cal A}}
\newcommand{\cF}{{\cal F}}
\newcommand{\cG}{{\cal G}}
\newcommand{\cS}{{\cal S}}
\def\kgam{\kappa}
\newcommand{\N}{\mathbb N}
\newcommand{\R}{\mathbb R}
\newcommand{\tF}{\tilde{F}}
\newcommand{\tu}{\tilde{u}}
\newcommand{\tw}{\tilde{w}}
\nwc{\bb}{{\bf b}}
\nwc{\bC}{{\bf C}}
\nwc{\bB}{{\bf B}}
\nwc{\bF}{{\bf F}}
\nwc{\bJ}{{\bf J}}
\nwc{\bk}{{\bf k}}
\nwc{\bm}{{\bf m}}
\nwc{\bn}{{\bf n}}
\nwc{\bN}{{\bf N}}
\nwc{\bfr}{{\bf r}}
\nwc{\bt}{{\bf t}}
\nwc{\bT}{{\bf T}}
\nwc{\bv}{{\bf v}}
\nwc{\bV}{{\bf V}}
\nwc{\bw}{{\bf w}}
\nwc{\bz}{{\bf z}}
\nwc{\vkappa}{{\vec{\kappa}}}
\nwc{\vr}{\vec{r}}
\nwc{\eps}{\varepsilon}
\nwc{\ip}[1]{\langle #1 \rangle}
\nwc{\ipbig}[1]{\left\langle #1 \right\rangle}
\newcommand{\be}{\begin{align}}
\newcommand{\ee}{\end{align}}
\newcommand{\nn}{\nonumber}
\newcommand{\ben}{\begin{align*}}
\newcommand{\een}{\end{align*}}
\title{Codimension One Minimizers of Highly Amphiphilic Mixtures}
\author{Shibin Dai\footnote{S.D. acknowledges support of NSF grants DMS-1802863 and DMS-1815746.} \\
Department of Mathematics, 
          The University of Alabama, \\
          Box 870350, Tuscaloosa, AL 35487-0350, USA \\
          \and Keith Promislow\footnote{K.P. acknowledges support
of NSF grants DMS-1409940 and DMS-1813203.} \\
          Department of Mathematics, 
          Michigan State University, \\
          East Lansing, MI 48824, USA}
\begin{document}
\maketitle


\maketitle
\begin{abstract}

We present a modified form of the Functionalized Cahn Hilliard (FCH) functional which
models highly amphiphilic systems in solvent.  A molecule is highly amphiphilic if the energy of 
a molecule isolated within the bulk solvent molecule is prohibitively high. 
For such systems once the amphiphilic molecules assemble into a structure it is very rare for a molecule 
to exchange back into the bulk.  The highly amphiphilic FCH functional has a well with limited
smoothness and admits compactly supported critical points. In the limit of molecular length $\vep\to0$
we consider sequences with bounded energy whose support resides within an $\vep$-neighborhood
of a fixed codimension one interface. We show that the FCH energy is uniformly bounded below, independent 
of $\vep>0$,
and identify assumptions on tangential variation of sequences that guarantee the existence of subsequences 
that converge 
to a weak solution of a rescaled bilayer profile equation, and show that sequences with limited tangential variation
enjoy a $\liminf$ inequality. For fixed codimension one interfaces we construct bounded energy sequences
which converge to the bilayer profile and others with larger tangential variation which do not converge to 
the bilayer profile but whose limiting energy can violate the $\liminf$ inequality, depending upon 
the energy parameters.
\end{abstract}

{AMS Subject Classification: 35B40, 35Q74, 35Q92}

{\it Keywords:} Functioanlized Cahn-Hilliard, local minimizers, bilayers, amphiphilic structures

\section{Introduction}

Amphiphilic molecules play an essential role in the self assembly of nano-scale structures in solvent,
in biological context they play an essential role in the formation of cell membranes and other organelles, 
and they are
increasingly important in applications of synthetic chemistry. There are distinct approaches to model the 
free energy of amphiphilic mixtures 
that emphasize different scalings and assumptions on morphology. The classical sharp interface 
approximations include the 
Canham-Helfrich energy  \cite{Canham-70, Helfrich-73} which characterizes the free energy of a codimension 
one interface embedded in
$\R^3$ in terms of its two curvatures. This is an appealingly simple formulation but does not readily 
handle singularities associated with topological change.  Conventional phase field models based on the 
Cahn-Hilliard (CH) energy
describe single layer interfaces, see \cite{dlw:retrieving, Chun-06, Loreti-00, Low2-09, Roger-2006, 
ryham:open-vesicles, Low-09} and references therein.  Single-layer interfaces separate dissimilar
phases which can not be merged and describe the volume of the boundary between the phases as if it were a 
high-energy void. For oil and water blends this is a very reasonable approximation, as a hydrophobic molecule 
generates a cavity when placed within water, \cite{Wiebe-12} and see Figure\,\ref{fig:simulation}(left).  Amphiphilic 
materials can create interfaces between similar fluids  \cite{Frederix-2018}, 
see Figure\,\ref{fig:simulation}(right), or 
reside at interfaces between dissimilar fluids, such as oil and water. In this latter case they are also
called surfactants and lower the overall  mixture energy by packing the hydrophobic cavity.  
Modeling amphiphilic interfaces with single layer energies  presents certain limitations, the first is that 
single layer models allow the volume of interface to change without material transport. 
Even when total interfacial volume is preserved by constraint, interfacial volume removed at one 
point may reappear at another distant point 
without requiring transport of surfactant. In many applications transport of surfactant is the most significant rate 
limiting step, this is particularly
true of highly amphiphilic molecules.  A second limitation of single layer models is that the single layer 
interface can not be punctured. The opening of holes in vesicles
requires the introduction of additional order parameters for {\sl each} vesicle, in particular the total number 
of vesicles 
must be predetermined, \cite{ryham:open-vesicles}.
 
 \begin{figure}[h]\label{fig:simulation}
	\centering
	\begin{tabular}{ccc}
	\includegraphics[height=4.5cm]{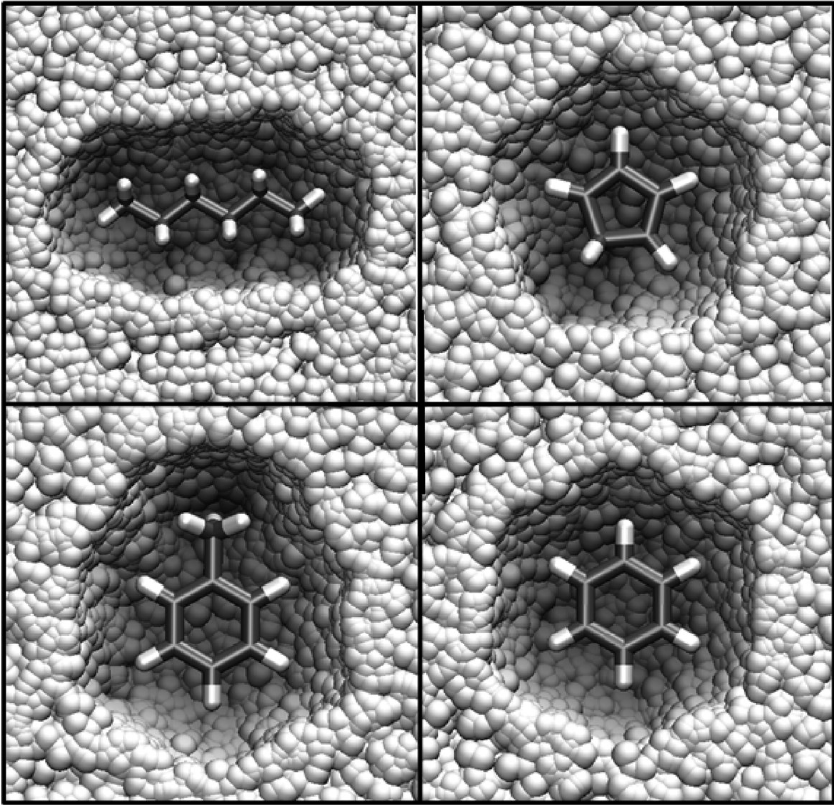}&\hspace*{0.2in}&
	\includegraphics[height=4.5cm]{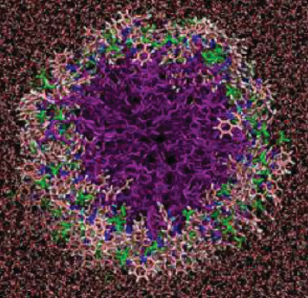}
	\end{tabular}
	\caption{(left) Results of molecular dynamics simulation showing shape of void induced in solvent phase
	in response to hydrophobic molecules of different shapes. 
	Reprinted with permission from \cite{Wiebe-12}
	Copyright (2012) American Chemical Society. 
	(right) All atom simulation of packing of 
	amphiphilic molecule (grey) at interface between external solvent molecules (reddish) and internal 
	solvent (not imaged to show internal structure). Reproduced from \cite{Frederix-2018} with 
	permission from the Royal Society of Chemistry.
	}
\end{figure}

The functionalized Cahn-Hilliard  (FCH) free energy is a phenomenological 
model describing the free energy of amphiphilic mixtures 
\cite{ckp:competition, dp:bilayer, dp:pore, promislow-2014, promislow-2011, promislow-2012,  nk-kp-2018},
that supports codimension one bilayer interfaces that separate two identical phases by a thin region of a  second 
phase -- the amphiphilic material. 
Bilayer interfaces can be punctured and can form free edges and open structures. Gradient flows of the FCH 
free energy 
transport amphiphilic molecules either along the interface or through the bulk solvent phase.  Over a bounded 
domain $\Omega\subset \R^n$, the conventional Cahn-Hilliard 
functional takes the form 
\begin{eqnarray}\label{def-E}
   E_\vep(u) :=\int_\Omega \left(\frac{\vep}{2}|\nabla u|^2 + \frac1\vep  W(u)\right)\;dx, 
\end{eqnarray}
where the double well potential $W:\R\mapsto\R$ has two equal-depth minimizers at $u=0$ and $u=u_+>0$ 
corresponding to the two phases, and $\vep>0$ is a small parameter characterizing the width of the void region 
between the phases. The weak functionalization form of the FCH functional can be scaled as
\begin{align}\label{def-F}
   F_\vep(u) := \int_\Omega\left\{\frac1{2\vep}\left( -\vep\Delta u + \frac1\vep  W'(u)\right)^2 
      - \left(\frac{\eta_1\vep}{2}|\nabla u|^2 + \frac{\eta_2}{\vep} W(u) \right)  \right\}\;dx.
\end{align}
The first term on the left-hand side of (\ref{def-F}) is the square of the variational derivative of a Cahn-Hilliard 
type energy and its volume integral captures distance of the configuration $u$ from a critical 
point of the underlying 
Cahn-Hilliard type functional -- these critical points represent the optimal packings of the amphiphilic molecules. 
The double well has unequal depth minima, $W(0)=0>W(u_+)$. The parameters $\eta_1>0$ and $\eta_2\in\R$ 
model the strength of hydrophilic interactions and the aspect ratio of the amphiphilic molecule respectively.  
For fixed $\vep>0$ and mass amphiphilic mass fraction $M:=\int_\Omega u\,dx$ 
the functional $F_\vep$ is bounded from below with a negative lower bound that diverges to negative infinity as 
$\vep\to 0$ \cite{PZ-13}.  

A key experimental quantity is the exchange rate which governs the probability of an 
amphiphilic molecule being ejected from an interface and returning to the bulk solvent. 
This rate is governed by the 
difference in free energies of a molecule when it is in the interface and when it is in the bulk. The insertion 
rate, that is, 
the propensity of an amphiphilic molecule in the bulk to be absorbed onto an interface, typically varies inversely 
to the 
exchange rate. These two processes mediate the exchange of surfactant materials between disjoint 
interfaces.  The experimental 
literature shows that the exchange rate decays exponentially with the length of the hydrophobic 
moiety \cite{bates-2003}, 
diminishing effectively to zero for sufficiently diblocks with sufficiently long hydrophobic regions. 
These ``highly amphiphilic'' surfactants 
have a prohibitively large solvation energy and produce structures that have almost no measurable 
exchange of amphiphilic molecules 
with the bulk. The structures they form act like isolated islands in a sea of solvent. Important classes of 
amphiphic are considered to be highly amphiphilic, 
these include most lipids which are characterized by a small hydrophilic head and longer, 
double-branched hydrophobic tail. 

Analysis of mass-preserving gradient flows of the FCH  functional has focused on the stability and 
asymptotic dynamics  of codimension one bilayers and codimension two filamentous pore structures. 
The analysis shows that 
the temporal  rate of surfactant exchange is inversely proportional to the second derivative of the double well 
at the pure solvent 
phase, $W''(0)$. This is relatively transparent from the form of the FCH free energy \qref{def-F}, 
for which a spatially constant 
distribution $u\equiv c$ has an energy
\beq
 F_\vep(c)=  \frac12\vep^{-3}c^2 |\Omega|\cdot |W''(0)|^2 + O(\vep^{-3}c^3).
 \eeq 
Moreover interfaces decay exponentially in space to the bulk constant value at a rate inversely proportional to 
$\vep\sqrt{W''(0)}.$ The exchange rate of an interfacial structure depends primarily upon 
its codimension. The differing exchange rates between structures of different codimension generically 
leads to growth of structures whose codimension has the lowest associated 
exchange rate at the expense of those structures with higher exchange rates 
\cite{dp:bilayer, dp:pore, promislow-2014, promislow-2011, promislow-2012, nk-kp-2018}.  To model highly 
amphiphilic molecules we drive the exchange rate to zero by considering a double well which is smooth for 
$u>0$ but 
only $C^{1+\alpha}$ in a neighborhood of $u=0$. This generates
minimizers of the FCH that are potentially compactly supported, with no mechanism to exchange 
surfactant molecules 
between disjoint structures. 

In experimental situations the amphiphilic materials typically occupy a small fraction of the total volume of 
the domain.  
Indeed, to produce an $O(1)$ surface area of codimension one bilayer requires an $O(\eps)$ volume 
of amphiphile. 
Given the compactly supported nature of the critical points
of the FCH energy it is natural to consider distributions  $u\in H^2(\Omega)$ of amphiphile whose support lies in a 
small subset $\Omega_\vep$ within the total domain $\Omega$. 
Considering the boundary $\partial \Omega$ of the domain to be unfavorable for amphiphilic molecules, 
for example a clean 
glass beaker,  it is
natural to impose no-contact, no-flux boundary conditions 
\begin{align}\label{boundary-cond}
	u = 0, \quad \frac{\partial u}{\partial\nu}= 0,
\end{align}
where $\nu$ is the outer normal of $\partial\Omega$.  These boundary conditions indicate that amphiphilic 
structures lie in $\Omega$ and away from the boundary 
$\partial \Omega$. We further require the local regularity assumption
\beq
	W(u) \sim u^r  \mbox{ for some } 3/2<r<2 \mbox{ as }u\to 0^+, \label{A-W1}\\
\eeq
and the growth rate assumptions
\begin{align}
		C_1|u|^p +C_2 \leq &W(u) \leq  C_1|u|^p + C_3,  	\label{G-W1}\\
		&|W'(u)|\leq C_1 p|u|^{p-1} + C_3', \label{G-W2} \\
		 C_1 p |u|^p  +C_4\leq &W'(u) u. 		 \label{G-W3}
 \end{align}
for some constants $C_1>0$ and $C_2,C_3, C_3',C_4\in\R$ and all $u\in\R$. 
Here  
\begin{align} \label{p-bound}
	2\leq p<\infty \;\;\mbox{ if  } n=2\qquad \mbox{ and  } \qquad 2\leq p<\frac{2n-2}{n-2} \;\;\mbox{ if } n\geq 3. 
\end{align}
Specifically for $n=3$, we require $2\leq p<4$. These conditions imply that
\[W(0)=W'(0)=0,  \quad W''(0^+)=+\infty. \] The requirement $r>3/2$ guarantees that the solvent-free profile, 
$u\equiv0,$ is a critical point of $F_\vep$, and the requirement $r<2$ 
signals the highly amphiphilic nature of the surfactant and guarantees the existence of compactly 
supported bilayer 
profiles corresponding to critical points of $F_\vep$.  The growth rate requirements on $p$ are technical 
considerations 
to establish the existence of minimizers, the value of $p$ has little impact on the model and no physical 
significance. A 
generic example of a double well satisfying these requirements is
 \beq
 \label{W-def}
 W(u) =\chi(u) |u|^{r}\left((u-u_+)^2+\tau \left(u- \frac{1+r}{r}u_+\right)\right) + C_5(1-\chi(u))|u|^p,
 \eeq 
 where $\chi:\R\mapsto \R$ is a $C^\infty$ cut-off function which is $1$ on $[-1, 2u_+]$ and zero 
 outside a compact set.
 The parameter $C_5>0$ is chosen large enough to guarantee that $W'$ has no zeros outside of $[0,u_+].$ 
 The parameter $\tau$ controls the depth of the right well: $W(u_+) = - \frac{\tau}{r}|u_+|^{1+r}<0$ for 
 $\tau>0.$

\begin{remark} \label{remark-1}
Finding local minimizers of $F_\vep$ under the restriction that $u\geq 0$ is both analytically and numerically 
challenging as the 
variational derivative of $F_\vep$ involves $W''$ which is not well defined at 0. One approach is to use 
variational inequalities,
as outlined in \cite{kinderlehrer:var-ineq}. While the model here differs from degenerate mobility ones 
considered for the Cahn Hilliard equation,   
\cite{cen:CH, dd:onesidedCH, dd:degenerateCH,  dd:numericCH, dd:weak,  Elliott-96, lms:degenerateCH},
the mechanism that prevents interaction through the bulk is  fundamentally distinct. For degenerate 
mobility molecules in the bulk are frozen in place and cannot move. In the model presented here 
molecules in the bulk phase would readily move and be  rapidly and permanently absorbed in finite time;
once depleted there is no mechanism to replenish the bulk density and bulk diffusion ceases.  
\end{remark}

\subsection{Summary of main results}
This discussion motivates the introduction of ``geometrically localized critical points,'' $u_\vep$ of $F_\vep$ over
the set of functions in $H^2(\Omega)$ whose support is contained within an open subset $\Omega_\vep$ 
compactly contained in $\Omega$.  These critical points are constructed in section 2 for fixed values of $\vep>0$. 
In section 3 for fixed $\ell>0$ we take $\Omega_\vep$ to be the $\vep\ell$ neighborhood of a fixed 
codimension one 
interface $\Gamma$ compactly embedded in $\Omega$, and analyze behaviors of sequences of functions 
$\{u_{\vep_k}\}_{k=1}^\infty$ with masses $\{m_{\vep_k}\}_{k=1}^\infty$ whose
energies $\{\cF_{\vep_k}(u_{\vep_k})\}_{k=1}^\infty$ are uniformly bounded as $\vep_k\to 0$ as $k\to\infty.$ 
We call 
these {\sl codimension-one $\ell$-bounded sequences} with respect to $\Gamma$, and establish upper 
bounds on their 
through plane and tangential  derivatives. In Theorem 3.1, under the assumption of slightly stronger bounds 
on the tangential derivatives, \qref{T1-B2},
we establish the existence of a subsequences which converge to a weak solution of the codimension one 
bilayer equation
\beq
  U_{zz} = W'(U),
\eeq
on a rescaled domain $\Omega_1$ where $z$ is $\vep$-scaled distance to $\Gamma$. In section 4, in 
Theorem 4.1 we establish
a $\liminf$ inequality that holds for all codimension-one $\ell$ bounded sequences that satisfy a yet 
slightly stronger bound, \qref{T2-B3}, on the tangential derivatives.
The $\liminf$ inequality provides a lower bound in terms of a $\sup$ of codimension-one energy, $\cG_1$, 
defined in \qref{def-CD1-Energy}, evaluated at the weak solutions
of the bilayer equation, see Corollary 4.2. Moreover for general codimension-one interfaces $\Gamma$ 
we construct codimension-one $\ell-$bounded sequences composed of
solutions of the bilayer equation which converge in a strong sense to the codimension-one energy $\cG_1$, 
achieving the lower bound. 
However, for the same interfaces, we construct a second family of codimension-one $\ell$ bounded 
sequences composed of superpositions of codimension $n$ 
critical points, called micelles when $n=3$,  with disjoint, compact support. These sequences do not satisfy 
the enhanced bounds on the tangential derivatives, and
do not have a subsequence converging to the bilayer equation, but their energies converge to a limit that may 
be lower than the associated codimension one limit, depending upon the 
choice of the functionalization parameters $\eta_1$  and $\eta_2$ and the size of the curvatures of $\Gamma$. 

\section{Geometrically localized minimizers of  $F_\vep$} 

Compactly supported solutions and positive solutions for reaction-diffusion equations have been of interest for 
theoretical and application reasons, see \cite{Diaz-15} and references therein. In \cite{Diaz-15} it has been shown 
that if the domain $\Omega$ is sufficiently large, then a class of low regularity reaction-diffusion equations 
admits compactly supported positive solutions, which can be interpreted as 
compactly supported positive critical points for free energies of the form (\ref{def-E}), with $\vep =1$ and 
a nonsmooth potential 
\[ W_{\alpha,\lambda}(u) = \frac1{\alpha+1} |u|^{\alpha+1} -\frac{\lambda}2u^2,\] 
for any $0<\alpha<1$, and $\lambda>\lambda_1$, where $\lambda_1>0$ is the first eigenvalue for the 
Laplace operator with 
Dirichlet boundary conditions. In our context a 
large domain is equivalent to a sufficiently small value of $\vep$. Although the
potential $W_{\alpha,\lambda}$ approaches negative infinity as $|u|\to \infty$, it 
shares the same 
strong absorption property as our non-smooth potential $W$, namely, 
$W'_{\alpha,\lambda}(0)=0, \;W''_{\alpha,\lambda}(0^+)= +\infty$.

In this section, we will show that for any subset $\Omega_\vep\subset\Omega$ such that 
$\partial\Omega_\vep$ is $C^1$,  $F_\vep$ has a minimizer $u_\vep$ over the class of 
nonnegative functions in $H^2(\Omega)$ with support inside of $\Omega_\eps$ subject to a prescribed 
total mass of the lipid phase, 
\beq
\label{mvep-def}
\int_{\Omega_\vep} u\,dx = m_\vep.
\eeq
Since $F_\vep(0)=0$, we need only consider the integral of the FCH energy density over the subset 
$\Omega_\vep$, 
denoted by $F_\vep|_{\Omega_\vep}$, 
over all $u$ in the admissible set
\begin{align}\label{def-A_vep}
	\cA_\vep:= &\left\{ u\in H_0^2(\Omega_\vep):\quad u\geq 0 \mbox{ in }\Omega_\vep, \;\;
	\int_{\Omega_\vep}u\,dx 
		= m_\vep \right\}.
\end{align}
To construct $u_\vep$,  we first derive a lower bound of $F_\vep|_{\Omega_\vep}$ over all 
$u\in H_0^2(\Omega_\vep)$. 
The proof is a modification
of that \cite{PZ-13}, which incorporates the positivity assumption that allows an explicit formulation of the 
lower bound on $\vep$.  
We highlight the key steps of the calculation.

\begin{lemma} \label{Lemma-lowerbound}
	Suppose $W$ is a double well potential satisfying (\ref{A-W1}) and  the growth assumptions 
	(\ref{G-W1})-(\ref{G-W3}) with $p\geq 2$ (but not necessarily the upper bound in \qref{p-bound}), 
then for $\vep$ sufficiently small and  $\eta_2< p \eta_1$ there exist constants $A_1, A_2>0$ depending  
only on $\eta_1,\eta_2, p , C_1, C_3$, and $C_4$, 
such that for any $u\in H_0^2(\Omega_\vep)$
	\begin{align}  
 	 	F_\vep|_{\Omega_\vep}(u)   &\geq \int_{\Omega_\vep}\left\{ 
    		\frac1{4\vep}\left( \frac{\delta E_\vep}{\delta u}\right)^2 
   		+ \frac{\eta_1\vep}2 |\nabla u|^2    + \frac{A_1}\vep |u|^p   \right\}\;dx - \frac{A_2}\vep |\Omega_\vep|.
   		\label{F-lowerbound}
	\end{align}
	
\end{lemma}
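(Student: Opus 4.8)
The plan is to expand the FCH energy density, isolate the "sign-definite" part, and absorb the negative functionalization term into a fraction of the leading square plus a fraction of the gradient term plus a bounded defect. First I would write out the first term of $F_\vep|_{\Omega_\vep}$ as
\[
\frac1{2\vep}\left(\frac{\delta E_\vep}{\delta u}\right)^2 = \frac1{2\vep}\left(-\vep\Delta u + \tfrac1\vep W'(u)\right)^2 = \frac{\vep}{2}(\Delta u)^2 - \Delta u\, W'(u) + \frac1{2\vep^3} W'(u)^2 .
\]
The cross term $-\Delta u\,W'(u)$ is the one that needs care: integrating by parts over $\Omega_\vep$ (legitimate since $u\in H_0^2(\Omega_\vep)$, so $u$ and $\partial u/\partial\nu$ vanish on $\partial\Omega_\vep$) gives $\int_{\Omega_\vep} W''(u)|\nabla u|^2\,dx$, and since $W''(u)\ge 0$ near $u=0$ and is bounded below on all of $\R$ by the growth assumptions, this term is controlled from below by $-C\int_{\Omega_\vep}|\nabla u|^2\,dx$ on the region where $u$ is bounded; more carefully one keeps it as a nonnegative contribution where $W''\ge 0$ and only pays a price on a bounded set. (This is exactly the step where the positivity constraint $u\ge 0$ is not actually needed for the lower bound itself, only $u\in H_0^2$, matching the hypothesis of the lemma.)

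Next I would handle the remaining negative piece $-\frac{\eta_1\vep}{2}|\nabla u|^2 - \frac{\eta_2}{\vep}W(u)$. The $-\frac{\eta_1\vep}{2}|\nabla u|^2$ term is already of the same order as the gradient term we want to keep, so the goal is to produce $+\eta_1\vep|\nabla u|^2$ from the positive part and keep half of it. For the $W$ terms, the idea is to combine $\frac1{2\vep^3}W'(u)^2 - \frac{\eta_2}{\vep}W(u)$ and use the growth bounds \qref{G-W1}--\qref{G-W3}: by \qref{G-W3}, $W'(u)u \ge C_1 p|u|^p + C_4$, and by \qref{G-W1}, $W(u)\le C_1|u|^p + C_3$, so $\frac1{2\vep^3}W'(u)^2$ dominates $\frac{\eta_2}{\vep}W(u)$ for $\vep$ small provided $\eta_2 < p\eta_1$ — here one uses a Young-type inequality $W'(u)^2 \ge$ (something like $c\,W'(u)u \cdot |u|^{-1}\cdots$) or more directly the quantitative comparison $\frac{C_1 p}{2\vep^3}|u|^p$-type lower bounds, matching \qref{G-W2} to pass between $W'(u)^2$ and $|u|^{2p-2}\ge |u|^p$ for large $u$ and between the bounded-$u$ regime handled separately. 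The constant $A_1$ emerges as a fixed positive fraction of $C_1 p$ (after the $\eta_1,\eta_2$ algebra), and $A_2$ collects all the bounded-$u$ defects times $|\Omega_\vep|$.

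The main obstacle is the bookkeeping in the regime where $u$ is $O(1)$: there $W''(u)$ can be negative, $W'(u)^2$ is not large, and the comparisons between $|u|^p$, $W(u)$, $W'(u)^2$, $W'(u)u$ all carry additive constants rather than being homogeneous, so one must split $\Omega_\vep$ into $\{u\le R\}$ and $\{u>R\}$ for a suitable fixed $R$, use the additive-constant growth bounds on the bounded part (yielding the $-\frac{A_2}{\vep}|\Omega_\vep|$ defect after multiplying by $\vep^{-1}$), and use the leading homogeneous behavior on the unbounded part. The condition $\eta_2 < p\eta_1$ is precisely what makes the coefficient of $|u|^p$ positive after combining terms. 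Everything else — keeping $\frac1{4\vep}(\delta E_\vep/\delta u)^2$ by only spending half the square on the cross term and on dominating $\eta_2 W(u)/\vep$, and keeping $\frac{\eta_1\vep}{2}|\nabla u|^2$ — is routine once the split is set up. I would follow the structure of \cite{PZ-13} for the square-completion and only insert the elementary pointwise inequalities needed to make $A_1,A_2$ explicit.
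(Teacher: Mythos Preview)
Your approach diverges from the paper's in a way that creates real difficulties. The paper never expands the square $\left(\frac{\delta E_\vep}{\delta u}\right)^2$. Instead it uses the single identity
\[
\int_{\Omega_\vep}\frac{\delta E_\vep}{\delta u}\,u\,dx=\int_{\Omega_\vep}\Bigl(\vep|\nabla u|^2+\tfrac1\vep W'(u)u\Bigr)dx,
\]
obtained by one integration by parts on $-\vep\Delta u\cdot u$ (valid for $u\in H_0^2$), then adds and subtracts $\eta_1$ times this quantity inside $F_\vep|_{\Omega_\vep}$. Young's inequality $\eta_1\frac{\delta E_\vep}{\delta u}\,u\le\frac1{4\vep}\bigl(\frac{\delta E_\vep}{\delta u}\bigr)^2+\vep\eta_1^2u^2$ then immediately yields the retained quarter-square, the gradient term $+\tfrac{\eta_1\vep}{2}|\nabla u|^2$, and the pointwise combination $\tfrac1\vep\bigl(\eta_1W'(u)u-\eta_2W(u)-\vep^2\eta_1^2u^2\bigr)$. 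The growth bounds \qref{G-W1} and \qref{G-W3} give this a lower bound $C_1(\eta_1 p-\eta_2)|u|^p+\text{const}$, so the hypothesis $\eta_2<p\eta_1$ is exactly what makes the $|u|^p$ coefficient positive. No domain splitting, no $W''$, no case analysis.

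Your route has two genuine gaps. First, the integration by parts $-\int\Delta u\,W'(u)=\int W''(u)|\nabla u|^2$ is not justified here: by \qref{A-W1} the well is only $C^{1+\alpha}$ at $0$ with $\alpha=r-1<1$, so $W''(0^+)=+\infty$ and $W'$ is merely H\"older; the chain rule $\nabla(W'(u))=W''(u)\nabla u$ for $u\in H^2$ is not available, and on the set $\{u=0\}$ the product $W''(u)\nabla u$ is of indeterminate form. Second, your sketch does not explain where the condition $\eta_2<p\eta_1$ actually enters. In your scheme the term $\tfrac1{4\vep^3}W'(u)^2$ scales like $\vep^{-3}|u|^{2p-2}$, which dominates $\tfrac{\eta_2}{\vep}W(u)\sim\vep^{-1}|u|^p$ for small $\vep$ \emph{regardless} of the relation between $\eta_1,\eta_2,p$; so the stated hypothesis would be idle in your argument, and the source of the positive $\tfrac{\eta_1\vep}{2}|\nabla u|^2$ term in the conclusion is left unclear. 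The paper's add--subtract trick produces both the gradient term and the sharp condition in one clean stroke, and uses only $W$ and $W'(u)u$, never $W''$.
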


\begin{proof}
The $L^2$ variational derivative of $E$ takes the form 
\begin{align} \label{delta-cE}
\frac{\delta E_\vep}{\delta u} = -\vep\Delta u  + \frac1\vep W'(u),
\end{align}
and we may write
\begin{align}
   \int_{\Omega_\vep} \frac{\delta E_\vep}{\delta u} u\;dx 
   &=\int_{\Omega_\vep} \vep |\nabla u|^2 + \frac1\vep W'(u)u.
\end{align}
Then
\begin{align} \label{F-lowerbound-0}
   F_\vep|_{\Omega_\vep}(u)&=\int_{\Omega_\vep}\left\{ 
   \frac{1}{2\vep}\left(\frac{\delta E_\vep}{\delta u}\right)^2
   -\frac{\eta_1\vep}{2}|\nabla u|^2
   -\frac{\eta_2}\vep W(u)
   \right\}\;dx \nn\\
   &=\int_{\Omega_\vep}\Biggl\{ \frac1{2\vep}\left( \frac{\delta E_\vep}{\delta u}\right)^2
      - \eta_1 \frac{\delta E_\vep}{\delta u}u +\eta_1\left(\vep |\nabla u|^2  + \frac1\vep W'(u)u \right)
    \nn\\
   &\hspace{.5in} -\frac{\eta_1\vep}{2}|\nabla_s u|^2
   -\frac{\eta_2}\vep W(u) \Biggr\}\;dx.
\end{align}
Since 
\begin{align}
   \eta_1\frac{\delta E_\vep}{\delta u}u 
   \leq \frac{1}{4\eps} \left( \frac{\delta E_\vep}{\delta u}\right)^2  + \eps \eta_1^2 u^2,
\end{align}
plugging into \qref{F-lowerbound-0}, we have 
\begin{align}
   F_\vep|_{\Omega_\vep}(u) &\geq \int_{\Omega_\vep}\left\{ 
   \frac1{4\vep}\left( \frac{\delta E_\vep}{\delta u}\right)^2 
   + \frac{\eta_1\vep}2 |\nabla u|^2  \right.\nn\\
  &\hspace{.5 in}  \left.  + \frac1\vep\biggl( \eta_1 W'(u)u 
  - \eta_2W(u)- \eta_1^2\vep^2 u^2\biggr)    \right\}\;dx. \label{F-lowerbound1}
\end{align}
Since $p\geq 2$, by \qref{G-W1} and \qref{G-W3}, we have
\[ \eta_1 W'(u)u   - \eta_2W(u)- \eta_1^2\vep^2 u^2 \geq \left(C_1(\eta_1 p -\eta_2) -\vep^2\eta_1^2\right)|u|^p 
+ (\eta_1C_4 -\eta_2 C_3).\]
For $\vep$ sufficiently small and $\eta_2<\eta_1 p$ we find $A_1, A_2>0$, depending only on $\eta_1,\eta_2,$ 
$ p, C_1, C_3, C_4$ such that 
\[ \eta_1 W'(u)u   - \eta_2W(u)- \eta_1^2\vep^2 u^2 \geq A_1 |u|^p -A_2,\]
for all $u$. From the lower bound \qref{F-lowerbound1} we arrive at the estimate (\ref{F-lowerbound}).
\end{proof}
\vskip 0.1in
\noindent To establish the existence of a minimizer of $F_\vep$ we impose additional restrictions on $p$.
\begin{theorem} In addition to the assumptions in Lemma \ref{Lemma-lowerbound}, assume the upper 
bound on $p$ described in \qref{p-bound} holds, then there exists $u_\vep\in \cA_\vep$ that minimizes 
 $F_\vep|_{\Omega_\vep}$ over $\cA_\vep$.
\end{theorem}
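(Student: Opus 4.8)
The plan is to use the direct method of the calculus of variations. Lemma \ref{Lemma-lowerbound} already gives a coercive lower bound for $F_\vep|_{\Omega_\vep}$ on all of $H_0^2(\Omega_\vep)$, so the infimum over $\cA_\vep$ is finite; the only issue is to show it is attained. First I would fix a minimizing sequence $\{u_k\}\subset\cA_\vep$, so that $F_\vep|_{\Omega_\vep}(u_k)$ is bounded. From \qref{F-lowerbound} each of the three nonnegative terms in the integrand is then bounded uniformly in $k$: in particular $\|\nabla u_k\|_{L^2(\Omega_\vep)}$ is bounded, $\|u_k\|_{L^p(\Omega_\vep)}$ is bounded, and $\|\delta E_\vep/\delta u (u_k)\|_{L^2(\Omega_\vep)}$ is bounded. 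Since $p\ge 2$ and $\Omega_\vep$ is bounded this controls $\|u_k\|_{L^2}$ and hence $\|u_k\|_{H^1}$. To upgrade to an $H^2$ bound I would write $\vep\Delta u_k = \tfrac1\vep W'(u_k) - \delta E_\vep/\delta u(u_k)$ and estimate the right-hand side in $L^2$: the second piece is already bounded, and for the first piece the growth bound \qref{G-W2} gives $|W'(u_k)|\le C_1 p|u_k|^{p-1}+C_3'$, so $\|W'(u_k)\|_{L^2}$ is controlled by $\|u_k\|_{L^{2(p-1)}}^{p-1}$. This is exactly where the restriction \qref{p-bound} enters: the Sobolev embedding $H^1(\Omega_\vep)\hookrightarrow L^{2(p-1)}(\Omega_\vep)$ holds precisely when $2(p-1)\le 2n/(n-2)$, i.e. $p\le (2n-2)/(n-2)$ for $n\ge 3$ (and all finite $p$ for $n=2$), so $\|W'(u_k)\|_{L^2}$ is bounded by the already-established $H^1$ bound. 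With $\|\Delta u_k\|_{L^2}$ bounded and the clamped boundary conditions $u_k\in H_0^2(\Omega_\vep)$, elliptic regularity gives a uniform bound on $\|u_k\|_{H^2(\Omega_\vep)}$.

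Next I would extract a subsequence, not relabeled, with $u_k\rightharpoonup u_\vep$ weakly in $H^2(\Omega_\vep)$. By Rellich–Kondrachov the embedding $H^2(\Omega_\vep)\hookrightarrow H^1(\Omega_\vep)$ is compact, so $u_k\to u_\vep$ strongly in $H^1$ and, after a further subsequence, pointwise a.e.; the compact embedding $H^2\hookrightarrow L^q$ for the relevant exponents $q$ also gives strong $L^p$ convergence (again using \qref{p-bound}). The strong $H^1$ and pointwise convergence, together with the $L^p$ bound and the growth bound \qref{G-W2}, let me pass to the limit in the nonlinear terms: $W(u_k)\to W(u_\vep)$ and $W'(u_k)\to W'(u_\vep)$ in $L^1$ (uniform integrability from the $L^p$ bound with $p>1$), and $W'(u_k)u_k\to W'(u_\vep)u_\vep$ similarly. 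Then I would check that $u_\vep$ lies in $\cA_\vep$: the constraint $\int_{\Omega_\vep}u_k\,dx=m_\vep$ passes to the limit by strong $L^1$ (indeed $L^2$) convergence, the nonnegativity $u_k\ge 0$ is preserved under a.e. convergence, and $u_\vep\in H_0^2(\Omega_\vep)$ because $H_0^2$ is weakly closed in $H^2$.

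Finally I would establish lower semicontinuity of $F_\vep|_{\Omega_\vep}$ along the subsequence. Rewrite the functional, as in the proof of Lemma \ref{Lemma-lowerbound}, in the form \qref{F-lowerbound1}, so that the integrand is a sum of a manifestly convex term $\tfrac1{4\vep}(\delta E_\vep/\delta u)^2 + \tfrac{\eta_1\vep}{2}|\nabla u|^2$ plus lower-order terms $\tfrac1\vep(\eta_1 W'(u)u - \eta_2 W(u) - \eta_1^2\vep^2 u^2)$ that converge strongly. For the gradient term, $\liminf_k\int|\nabla u_k|^2\ge\int|\nabla u_\vep|^2$ by weak $H^1$ lower semicontinuity; for the squared-variational-derivative term I would note that $\delta E_\vep/\delta u(u_k) = -\vep\Delta u_k + \tfrac1\vep W'(u_k)$ converges weakly in $L^2$ (since $\Delta u_k\rightharpoonup\Delta u_\vep$ weakly in $L^2$ and $W'(u_k)\to W'(u_\vep)$ strongly in $L^2$ by the embedding argument above), so the $L^2$-norm-squared is weakly lower semicontinuous. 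Adding the strongly convergent remainder gives $\liminf_k F_\vep|_{\Omega_\vep}(u_k)\ge F_\vep|_{\Omega_\vep}(u_\vep)$, and since $\{u_k\}$ was minimizing, $u_\vep$ is a minimizer. The main obstacle is the bootstrap from the natural energy bounds to a genuine $H^2$ bound — i.e. controlling $\|W'(u_k)\|_{L^2}$ — which is exactly the step where the upper bound on $p$ in \qref{p-bound} is needed and which is why this hypothesis was absent in Lemma \ref{Lemma-lowerbound} but required here; once the $H^2$ bound and the attendant compactness are in hand, the weak closedness of the constraint set and the convexity structure of the dominant terms make the rest routine.
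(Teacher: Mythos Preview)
Your argument follows the paper's almost step for step: coercivity from \qref{F-lowerbound} gives an $H^1$ bound, the restriction \qref{p-bound} lets you control $\|W'(u_k)\|_{L^2}$ via Sobolev embedding and hence bootstrap to $H^2$, and then compactness plus weak lower semicontinuity finish. One step as written does not work, however: \qref{F-lowerbound1} is an \emph{inequality} $F_\vep|_{\Omega_\vep}(u)\ge\cdots$, not an identity, so you cannot ``rewrite the functional in the form \qref{F-lowerbound1}.'' Proving that the right-hand side of \qref{F-lowerbound1} is lower semicontinuous along the sequence only yields $\liminf_k F_\vep|_{\Omega_\vep}(u_k)\ge \text{(RHS at }u_\vep)$, and since that right-hand side is itself $\le F_\vep|_{\Omega_\vep}(u_\vep)$, the chain of inequalities does not close.

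The repair is immediate and is exactly what the paper does: work directly with the original form \qref{def-F}. The squared variational-derivative term is weakly lower semicontinuous just as you argue, via the weak $L^2$ convergence of $-\vep\Delta u_k+\vep^{-1}W'(u_k)$. But in \qref{def-F} the gradient term $\tfrac{\eta_1\vep}{2}|\nabla u|^2$ carries a \emph{negative} sign, so a bare $\liminf$ inequality for $\int|\nabla u_k|^2$ points the wrong way; you need the full limit $\int|\nabla u_k|^2\to\int|\nabla u_\vep|^2$, which you already have from the strong $H^1$ convergence you established. Likewise $\int W(u_k)\to\int W(u_\vep)$ follows from your strong $L^p$ convergence and the growth bound \qref{G-W1} via generalized dominated convergence. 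With these two adjustments your proof is complete and coincides with the paper's.
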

\begin{proof}
	This theorem can be proved following a standard procedure (see, e.g., \cite{PZ-13}).  
	For the convenience of the readers, we briefly describe the procedures for $n\geq 3$. 
	The case $n=2$ is similar and simpler. Write $2^*:=2n/(n-2)$. Suppose 	
	$\{u_k\}$ is a minimizing sequence in $\cA_\vep$ for $F_\vep$. By \qref{F-lowerbound}, 
	$\{u_k\}$ is bounded in $H^1(\Omega_\vep)$. By the Sobolev embedding theorem, 
	$\{u_k\}$ is bounded in $L^q(\Omega_\vep)$ for any $1\leq q \leq 2^*$. Furthermore, by the 
	compact embedding theorem, 
	there is a subsequence, not relabeled, and a function $u_\vep\in H^1(\Omega_\vep)$ 
	such that 
	\begin{align} \label{sec2-conv1}
		u_k\to u_\vep \mbox{ a.e. in } \Omega_\vep \mbox{ and strongly in } L^q(\Omega_\vep)\mbox{ for }
		1\leq q< 2^*. 
	\end{align}
	If $p<\frac{2n-2}{n-2}$, then $|W'(u_k)|\sim |u_k|^{p-1}$ is bounded in $L^2(\Omega_\vep)$. 	
	By \qref{F-lowerbound}, $-\vep\Delta u_k +\frac1\vep W'(u_k)=\frac{\delta E_\vep}{\partial u}(u_k)$
	is bounded in $L^2(\Omega_\vep)$.  The triangle inequality implies that $\Delta u_k$ is bounded in 
	$L^2(\Omega_k)$.
	So actually $u_k$ is bounded in $H^2(\Omega_\vep)$. We can extract a further subsequence, not relabeled, 
	such that 
	\begin{align}
		u_k &\rightharpoonup u_\vep \mbox{ weakly in }H^2(\Omega_\vep),  \label{sec2-conv2} \\
		u_k & \to u_\vep \mbox{ strongly in } H^1(\Omega_\vep), \label{sec2-conv3} \\
		u_k & \to u_\vep  \mbox{ a.e. in }\Omega_\vep \mbox{ and strongly in } L^q(\Omega_\vep) 
			\label{sec2-conv4}
	\end{align}
	for any $1\leq q<\infty$ if $n\leq 4$ and $1\leq q <2n/(n-4)$ if $n>4$. Since $\cA_\vep$ is a closed 
	convex subset of $H^2(\Omega_\vep)$, 
	we see that $u_\vep\in\cA_\vep$.

	Since $u_k\to u_\vep$ a.e. in $\Omega_\vep$, by the continuity of $W'$, 
	we have $W'(u_k)\to W'(u_\vep)$ a.e. in $\Omega_\vep$. By the growth condition \qref{G-W2} of $W'$, 
	we have $|W'(u_k)| \leq C_1p|u_k|^{p-1}+ C_3'$. Since $|u_k|^{p-1}\to |u_\vep|^{p-1}$ strongly in
	$L^2(\Omega_\vep)$, 
	using the generalized Dominated Convergence Theorem, we know that $W'(u_k)\to W'(u_\vep)$ strongly in 
	$L^2(\Omega_\vep)$. 
	Combined with \qref{sec2-conv2}, we have
	\begin{align} 
		-\vep\Delta u_k + \frac1\vep W'(u_k) \rightharpoonup -\vep \Delta u_\vep + \frac1\vep W'(u_\vep) 
		\mbox{ weakly in } L^2(\Omega_\vep).
	\end{align}
	Hence 
	\begin{align}\label{sec2-term1}
		\int_{\Omega_\vep} \left(-\vep \Delta u_\vep + \frac1\vep W'(u_\vep) \right)^2\,dx 
		\leq \liminf_{k\to\infty} \int_{\Omega_\vep} \left( -\vep\Delta u_k + \frac1\vep W'(u_k)\right)^2\,dx. 
	\end{align}
	By \qref{sec2-conv3}, we have 
	\begin{align}\label{sec2-term2} 
		\int_{\Omega_\vep} |\nabla u_\vep|^2\,dx = \lim_{k\to\infty} \int_{\Omega_\vep} |\nabla u_k|^2\,dx. 
	\end{align}
	Since $|W(u)|\sim |u|^p$  as $|u|\to\infty$ and $2\leq p< \frac{2n-2}{n-2}$, which is smaller than 
	$2n/(n-4)$ if $n>4$, 
	by \qref{sec2-conv4} and the generalized Dominated 
	Convergence Theorem, we have 
	\begin{align}\label{sec2-term3}
		\int_{\Omega_\vep} W(u_\vep)\,dx = \lim_{k\to\infty} \int_{\Omega_\vep} W(u_k)\,dx. 
	\end{align}
	Combining \qref{sec2-term1}, \qref{sec2-term2}, and \qref{sec2-term3}, we have 
	$F_\vep|_{\Omega_\vep}(u_\vep) \leq \liminf_{k\to\infty}F_\vep|_{\Omega_\vep}(u_k)$.
	Since $u_k\in\cA_\vep$ is a minimizing sequence for $F_\vep|_{\Omega_\vep}(u_\vep)$ 
	and $u_\vep\in\cA_\vep$, 
	we conclude that 
	$u_\vep$ is a minimizer for $F_\vep|_{\Omega_\vep}$ in $\cA_\vep$.
\end{proof}
\vskip 0.1in

Since $F_\vep|_{\Omega_\vep}$ is a non-convex functional, the family of critical points is typically not 
unique. 
Since $\cA_\vep$
is a closed and convex subset of $H_0^2(\Omega)$, we may resort to techniques of variational inequalities. 
Let $u_\vep\in\cA_\vep$ be any minimizer of $F_\vep|_{\Omega_\vep}$ over $\cA_\vep$. Fix
any $v\in\cA_\vep$, define $j_{\Omega_\vep}(s):=F_\vep|_{\Omega_\vep}(u_\vep + s(v-u_\vep))$ for any 
$s\in [0,1]$. Then  $j_{\Omega_\vep}(0)\leq j_{\Omega_\vep}(s)$ for any $s\in [0,1]$. So 
if $j_{\Omega_\vep}(s)$ is
differentiable, then  $ j_{\Omega_\vep}'(0)\geq 0$ , which gives a variational inequality
\begin{align}\label{var-ineq1}
	&\left\langle \frac{\delta F_\vep|_{\Omega_\vep}}{\delta u}(u_\vep), v-u_\vep \right\rangle \nn\\
	&:=
	\int_{\Omega_\vep}  \biggl\{\frac1\vep\left( -\vep\Delta u_\vep +\frac1\vep W'(u_\vep)\right)
	\left( -\vep\Delta(v-u_\vep) 
	+\frac1\vep W''(u_\vep)(v-u_\vep)\right)  \nn\\
	& \qquad-  \left(\eta_1\vep \nabla u_\vep\cdot \nabla(v-u_\vep) + \frac{\eta_2}{\vep} W'(u_\vep) 
	(v-u_\vep) \right)
	\biggr\}dx \nn\\
	&\geq 0 \quad \mbox{ for all } v\in\cA_\vep.
\end{align}
Given a minimizer  $u_\vep\in \cA_\vep$ for $F_\vep|_{\Omega_\vep}$, we extend $u_\vep$ by zero 
outside of $\Omega_\vep$.  We denote this extension by $\overline u_\vep$. Then 
\[ \overline u_\vep \in \cA_{\Omega,\vep}:= \left\{ v\in H_0^2(\Omega), \quad v \geq 0 \mbox{ in } \Omega, 
	\quad \int_\Omega v \,dx = m_\vep\right\}.\]
$\overline u_\vep$ may not be a minimizer of $F_\vep$ over $\cA_{\Omega,\vep}$, but $\overline u_\vep$ does have some 
nice properties. 
We split 
$F_\vep$ into two parts, $F_\vep|_{\Omega_\vep}$ and $F_\vep|_{\Omega\setminus\overline\Omega_\vep}$.
For any $\phi\in H_0^2(\Omega\setminus\overline\Omega_\vep)$,  
a straightforward calculation shows that under the assumption \qref{A-W1}, the 
first order variational derivative of $F_\vep|_{\Omega\setminus\overline\Omega_\vep}$ at 0 along $\phi$ is 
zero, that is, 
\begin{align}\label{var-eq}	
	\left\langle\frac{\delta F_\vep|_{\Omega\setminus\overline\Omega_\vep}}{\delta u}(0), \phi \right\rangle 
	:= \lim_{s\to 0} \frac{F_\vep|_{\Omega\setminus\overline\Omega_\vep}(s\phi) 
	- F_\vep|_{\Omega\setminus\overline\Omega_\vep}(0)}{s}=0. 
\end{align}
The combination of \qref{var-ineq1} and \qref{var-eq} gives that for any $v\in H_0^2(\Omega)$ such that 
$v|_{\Omega_\vep} \in\cA_\vep$, we have
	\begin{align}\label{var-ineq2}
		\left\langle \frac{\delta F_\vep}{\delta u}(\overline u_\vep), v-\overline u_\vep \right\rangle \geq 0.
	\end{align}
	Here the variational derivative of $F_\vep$ is defined the same way as that of $F_\vep|_{\Omega_\vep}$ 
	in \qref{var-ineq1}, 
	with $\Omega_\vep$ replaced by $\Omega$. 
\qref{var-ineq2} states that $\overline u_\vep$ is a critical point of $F_\vep$ under perturbations that are 
away from 
$\partial\Omega_\vep$, and also preserves the total concentration in $\Omega_\vep$.  
In this sense, we say that $\overline u_\vep$ is a geometrically localized minimizer of $F_\vep$. 
If $u$ is a minimizer of $F_\vep$ over $\cA_{\Omega,\vep}$, 
 then $u$ satisfies 
\begin{align}\label{var-ineq3}
		\left\langle \frac{\delta F_\vep}{\delta u}( u), v-u \right\rangle \geq 0 \qquad\mbox{ for all }v\in\cA_{\Omega,\vep}.
\end{align}
In this case, the lipid domain $\Omega_\vep$ is implicitly defined as $\{x\in\Omega: u(x)>0\}$. 
We leave the exploration of \qref{var-ineq3} for future studies.

\section{Codimension one minimizers}

We address the issue of convergence of sequences with bounded energy as $\vep\to 0$ by imposing 
the additional  assumption that $\Omega_\vep$ is a thin region composed of points $x\in\Omega$
that are $O(\vep)$  from a sufficiently smooth, non-self-intersecting, codimension one hypersurface $\Gamma$. 
In parametric form,  we can write
$\Gamma=\{\phi(s): s=(s_1,\dots,s_{n-1})\in Q\subset \R^{n-1}\}$, where the  parameterization is chosen so that  
$s_i$ is the arc length along the $i^{\rm th}$ coordinate curve and the coordinate curves are lines of curvature. 
Let $\bn$ be the normal of $\Gamma$ and assume all principal curvatures $\kgam_j(s), j=1,\dots,n-1$, of 
$\Gamma$ are 
continuously differentiable and  bounded. That is, there  exists $\kgam_0>0$ such that 
\begin{equation}
\label{Gamma-bounds}
 |\kgam_j(s)|\leq \kgam_0, \quad \left| \frac{\partial \kgam_j}{\partial s_i}(s)\right|\leq \kgam_0 \quad\mbox{ for all }s\in Q 
	\mbox{ and all } i, j=1\dots,n-1.
\end{equation}
The boundedness of $\kgam_j(s)$ guarantees that there exists $\ell>0$ such that the thin region  
\begin{align} 
	\Omega_1^\ell:=\{\phi(s)+ z \bn(s): s\in Q, -\ell<z<\ell\}.
\end{align}
does not self-intersect.  The change of variables $x\mapsto (s,z)$ for $x\in\Omega_\vep$ given by 
\[ x=\phi(s) + \vep z\bn,\]
is well defined and smooth with a smooth inverse $x\mapsto (s,z)$ on $\Omega_1^\ell$. The inverse 
$x\mapsto \phi(s(x))$ 
is the projection of point 
$x\in\Omega_\vep$ onto $\Gamma$, and $z\in(-\ell, \ell)$ is the $\vep$-scaled signed distance of 
$x$ to $\Gamma$. 

We define  $\Omega_\vep\subset \Omega$ to be the region of distance $2\vep \ell$ to $\Gamma$. More
specifically
\beq
\Omega_\vep:=\{\phi(s) + \zeta\bn(s): s\in Q,  -\vep \ell<\zeta<\vep \ell\},
\eeq 
and it has volume $|\Omega_\vep| =\vep |\Omega_1^\ell|$. Defining 
$\tu(s,z) = u(x(s,z))$, then $\tu\in H^2(\Omega_1^\ell)$, and $\tu(s,\pm \ell)=0$ for all $s\in Q$.

We will use a subscript $x$ to indicate  operators in the Cartesian coordinates $x$. Let $k_1,\dots,k_{n-1}$
be the principal curvatures of $\Gamma$ and $\bT_1, \dots,\bT_{n-1}$ the corresponding unit tangent vectors.
 Under the change of variables
$x\mapsto (s,z)$,  
$\nabla_x u(x)$ and $\Delta_x u(x)$ have the following forms in the $(s,z)$ coordinates 
\cite{dp:bilayer, promislow-2011}
\begin{align}
   \nabla_x u(x) &=\sum_{j=1}^{n-1}\frac{\bT_j}{1+\vep z \kgam_j}\frac{\partial\tu}{\partial s_j} 
   + \vep^{-1}\bn\frac{\partial\tu}{\partial z} =:\tilde D\tu(s,z), \label{def-tildeD-tu} \\
\Delta_x u(x) &=\sum_{j=1}^{n-1}\frac{1}{(1+\vep z\kgam_j)^2}\frac{\partial^2 \tu}{\partial s_j^2} 
  + \vep^{-1} \sum_{j=1}^{n-1} \frac{\kgam_j}{1+\vep z\kgam_j}\frac{\partial\tu}{\partial z} 
 + \vep^{-2}\frac{\partial^2\tu}{\partial z^2}  \nn\\
    &\quad -\vep \sum_{j=1}^{n-1} \frac{\partial \kgam_j}{\partial s_j}\frac{1}{(1+\vep z \kgam_j)^3} 
    \frac{\partial \tu}{\partial s_j}
=:\tilde\Delta\tu(s,z). 
\label{def-tildeDelta-tu}
\end{align}
By \qref{def-tildeD-tu} and \qref{def-tildeDelta-tu}, $F_\vep(u) = \tF_\vep^\ell(\tu)$
where  
\begin{align} \label{def-tF^ell} 
\tF_\vep^\ell(\tu) &:= \int_{\Omega_1^\ell}\left\{\frac1{2}\left( -\vep\tilde\Delta \tu+\frac1\vep W'(\tu)\right)^2 
      - \left(\frac{\eta_1\vep^2}{2}|\tilde D \tu|^2 + {\eta_2}W(\tu) \right)  \right\}Jdsdz.
\end{align}
Here the scaled Jacobian can be expressed as
\beq
J(s,z) = \sum\limits_{j=0}^n \vep^jK_j(s)z^j,
\label{def-Jac}
\eeq
in terms of the j'th Gaussian curvatures $K_0=1$, and 
$$K_j:= \sum_{i_1<\cdots<i_j} \kappa_{j_1}\cdots\kappa_{i_j}.$$
In particular we remark that $K_1=H_0$ and $J\to1$ in all Sobolev norms as $\vep\to0$. 
In the scaled variables the lower bound \qref{F-lowerbound}  takes the form,  
\begin{align}      
      \tF_\vep^\ell(\tu) &\geq \int_{\Omega_1^\ell}\left\{\frac1{4}\left( -\vep\tilde\Delta \tu+\frac1\vep W'(\tu)\right)^2 
      + \frac{\eta_1\vep^2}{2}|\tilde D \tu|^2 + A_1 |\tu|^p \right\}Jdsdz - A_2 |\Omega_1^\ell|.\label{tF^b-lowerbound}
\end{align}
Taking $m_\vep=m\vep$ in the mass constraint, (\ref{mvep-def}), we rewrite this the equivalent condition,
 \beq
 \int_{\Omega_1^\ell} \tu \;J dsdz = m
 \eeq
 on the rescaled domain.
\begin{theorem}\label{sec3-T1}
Fix a codimension one interface $\Gamma$ with curvature bound $\kgam_0$ as above, and choose $\ell\in(0,1/(2\kgam_0))$. 
Then there exists $C>0$ such that for any codimension-one $\ell$-bounded sequence $\{\tu_k\}_{k=1}^\infty$ from $H^2(\Omega_1^\ell)$
the following bounds hold for all $k\in\N$, 
\beq \label{T1-B1}
\left\| \tu_k \right\|_{L^p(\Omega_1^\ell)} + 
	\left\| \frac{\partial \tu_k}{\partial z}\right\|_{L^2(\Omega_1^\ell)}  \!\! +
	\vep_k \sum\limits_{j=1}^{n-1} \left\| \frac{\partial \tu_k}{\partial s_j}\right\|_{L^2(\Omega_1^\ell)}
	\leq C.
\eeq
If we assume in addition that 
\beq\label{T1-B2}
 \sum\limits_{j=1}^{n-1} \left( \left\| \frac{\partial \tu_k}{\partial s_j}\right\|_{L^2(\Omega_1^\ell)} \!\!
+\vep_k \left\| \frac{\partial^2\tu_k}{\partial s_j^2}\right\|_{L^2(\Omega_1^\ell)}\right)  \leq C
\eeq
for all $k\in\N$,	then we have the following conclusions.
\begin{enumerate}
	\item In the limit when $k\to\infty$, the Cahn-Hilliard energy is equi-partitioned along the 
	normal direction, that is, 
	\begin{align} \label{equi-partition}
		\lim_{k\to\infty} \int_{\Omega_1^\ell} \left| \frac12 \left(\frac{\partial \tu_k}{\partial z} \right)^2 - W(\tu_k)
		\right|dsdz =0.
		\end{align}
	\item There exists a subsequence  $\tu_{k_i}$  and a $\tu^*\in H_0^1(\Omega_1^\ell)\cap L^{p}(\Omega_1^\ell)$, 
such that $\tu_{k_i}$ converges weakly to $\tu^*$. Moreover $\tu^*$ is a weak solution to the bilayer equation
		\beq \label{T1-BL}
 			-\frac{\partial^2\tu^*}{\partial z^2} + W'(\tu^*)=0,
 		\eeq
 	over $\Omega_1^\ell$, in the sense that 
 	\begin{align}
	\label{weak-BL}
		\int_{\Omega_1^\ell} \left(\frac{\partial\tu^*}{\partial z} \frac{\partial\phi}{\partial z}+ W'(\tu^*)\phi\right)
			\,dsdz =0,
	\end{align}
	for all $\phi\in H_0^1(\Omega_1^\ell)$ if $p\leq2^*=2n/(n-2)$, and for all 
	$\phi\in H_0^1(\Omega_1^\ell)\cap L^p(\Omega_1^\ell)$ if $p>2^*.$
 \end{enumerate}
\end{theorem}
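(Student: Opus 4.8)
The plan is to extract all three conclusions from the coercive lower bound \qref{tF^b-lowerbound} of Lemma~\ref{Lemma-lowerbound}, combined with a careful separation — inside the operator $-\vep\tilde\Delta$ of \qref{def-tildeDelta-tu} — of the one-dimensional bilayer operator $-\partial_z^2+W'(\cdot)$ from its curvature- and tangential-derivative corrections. For the bounds \qref{T1-B1}, I first note that the choice $\ell\in(0,1/(2\kgam_0))$ forces $|\vep_k z\kgam_j|\le\ell\kgam_0<\tfrac12$ on $\Omega_1^\ell$, so the Jacobian \qref{def-Jac} satisfies $0<c\le J\le C$ uniformly in $k$; moreover, since $\{\bT_1,\dots,\bT_{n-1},\bn\}$ is orthonormal, \qref{def-tildeD-tu} yields the clean identity $\vep_k^2|\tilde D\tu_k|^2=\sum_j\vep_k^2(1+\vep_k z\kgam_j)^{-2}|\partial_{s_j}\tu_k|^2+|\partial_z\tu_k|^2$. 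Substituting the uniform energy bound into \qref{tF^b-lowerbound} and discarding nonnegative terms then gives uniform control of $\int|\tu_k|^pJ$, of $\int|\partial_z\tu_k|^2J$, and of $\vep_k^2\sum_j\int|\partial_{s_j}\tu_k|^2J$; dividing by the lower bound on $J$ and applying Cauchy--Schwarz to the finite $j$-sum produces \qref{T1-B1}.

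The crux is to show that, under the enhanced assumption \qref{T1-B2}, the normal residual $g_k:=-\partial_z^2\tu_k+W'(\tu_k)$ tends to $0$ in $L^2(\Omega_1^\ell)$. Setting $R_k:=-\vep_k\tilde\Delta\tu_k+\vep_k^{-1}W'(\tu_k)$, formula \qref{def-tildeDelta-tu} rearranges to
\[
g_k=\vep_k R_k+\vep_k\sum_j\frac{\kgam_j}{1+\vep_k z\kgam_j}\partial_z\tu_k+\vep_k^2\sum_j\frac{\partial_{s_j}^2\tu_k}{(1+\vep_k z\kgam_j)^2}-\vep_k^3\sum_j\frac{\partial_{s_j}\kgam_j}{(1+\vep_k z\kgam_j)^3}\partial_{s_j}\tu_k.
\]
From \qref{tF^b-lowerbound} and the energy bound, $\|R_k\|_{L^2}\le C$, so the first term is $O(\vep_k)$; by \qref{T1-B1} the second term is $O(\vep_k)$ and the fourth is $O(\vep_k^2)$; and precisely here \qref{T1-B2} is indispensable, because it turns $\vep_k^2\|\partial_{s_j}^2\tu_k\|_{L^2}$ into $\vep_k\cdot\bigl(\vep_k\|\partial_{s_j}^2\tu_k\|_{L^2}\bigr)=O(\vep_k)$ — something \qref{T1-B1} alone cannot supply. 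Hence $\|g_k\|_{L^2(\Omega_1^\ell)}\to0$; this is the single load-bearing estimate, and everything else is soft analysis.

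For the equipartition \qref{equi-partition} I would set $\psi_k:=\tfrac12(\partial_z\tu_k)^2-W(\tu_k)$, which for $H^2$ functions satisfies $\partial_z\psi_k=-g_k\,\partial_z\tu_k$ almost everywhere, so $\|\partial_z\psi_k\|_{L^1}\le\|g_k\|_{L^2}\|\partial_z\tu_k\|_{L^2}\to0$ by the previous step and \qref{T1-B1}. Since every admissible $\tu_k$ vanishes to second order on $\{z=\pm\ell\}$, we have $\psi_k(\cdot,\pm\ell)=0$, and the fundamental theorem of calculus in $z$ gives $\|\psi_k\|_{L^1(\Omega_1^\ell)}\le 2\ell\|\partial_z\psi_k\|_{L^1(\Omega_1^\ell)}\to0$. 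For the weak limit, \qref{T1-B1}--\qref{T1-B2} show $\{\tu_k\}$ is bounded in $H_0^1(\Omega_1^\ell)\cap L^p(\Omega_1^\ell)$, so a subsequence converges weakly there to some $\tu^*$ in that space, strongly in $L^2$, and a.e.; by the growth bound \qref{G-W2}, $W'(\tu_k)$ is bounded in $L^{p/(p-1)}$, and a.e.\ convergence promotes this to $W'(\tu_k)\rightharpoonup W'(\tu^*)$ in $L^{p/(p-1)}$. Testing $g_k$ against $\phi\in H_0^1(\Omega_1^\ell)$ (also requiring $\phi\in L^p$ when $p>2^*$), integrating by parts in $z$, and letting $k\to\infty$ — using $\int g_k\phi\to0$, weak $L^2$ convergence of $\partial_z\tu_k$, weak $L^{p/(p-1)}$ convergence of $W'(\tu_k)$, and the embedding $H_0^1\hookrightarrow L^{2^*}\hookrightarrow L^p$ valid when $p\le2^*$ — yields exactly \qref{weak-BL}, i.e.\ $\tu^*$ is a weak solution of \qref{T1-BL}.

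I expect the main obstacle to be the second step: identifying which remainder in the $(s,z)$-expansion of $-\vep_k\tilde\Delta\tu_k$ is borderline (namely $\vep_k^2\partial_{s_j}^2\tu_k$) and recognizing that controlling it is precisely what forces the strengthened hypothesis \qref{T1-B2} rather than the a priori bound \qref{T1-B1}. A secondary, purely technical nuisance is that for $p>2^*$ the nonlinearity $W'(\tu_k)$ lies only in $L^{p/(p-1)}$ rather than in $L^2$, which is why the admissible test functions in \qref{weak-BL} must be intersected with $L^p$ in that regime.
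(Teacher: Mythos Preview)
Your proposal is correct and follows essentially the same approach as the paper's proof: both extract \qref{T1-B1} directly from the coercive bound \qref{tF^b-lowerbound}, both isolate the normal residual $g_k=-\partial_z^2\tu_k+W'(\tu_k)$ from the expansion \qref{def-tildeDelta-tu} and use \qref{T1-B2} precisely to kill the $\vep_k^2\partial_{s_j}^2\tu_k$ term, both derive equipartition via $\partial_z\psi_k=-g_k\partial_z\tu_k$ and the vanishing boundary data at $z=\pm\ell$, and both pass to the weak bilayer equation by combining $H^1$-weak compactness with convergence of $W'(\tu_k)$. The only minor stylistic difference is that the paper routes the nonlinear term through strong $L^{q/(p-1)}$ convergence (via generalized dominated convergence) before upgrading to weak convergence in the top space $L^{q_0/(p-1)}$ with $q_0=\max\{p,2^*\}$, whereas you go directly from boundedness in $L^{p/(p-1)}$ plus a.e.\ convergence to weak convergence; both are standard and yield the same test-function restriction.
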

\vskip 0.1in
\noindent
\begin{proof}
We infer from \qref{tF^b-lowerbound}
that that there exists $M>0$ such that
\begin{align}\label{upper-bound0}
	\int_{\Omega_1^\ell}\left\{\frac1{4}\left( -\vep_k\tilde\Delta \tu_k+\frac1{\vep_k} W'(\tu_k)\right)^2 
      + \frac{\eta_1\vep_k^2}{2}|\tilde D \tu_k|^2 + A_1  |\tu_k|^p \right\}dsdz \leq M.
\end{align}
Dropping the first two terms on  the left-hand side of (\ref{upper-bound0}) yields the bound  
	\begin{align}
		\left\| \tu_k \right\|_{L^p(\Omega_1^\ell)} 
		& \leq \left(\frac{M}{A_1}\right)^{1/p}. \label{u-bound}
	\end{align}
Recalling \qref{def-tildeD-tu} and keeping only the second term on the left-hand side of (\ref{upper-bound0}) implies the bound
\begin{align}
	\frac{\eta_1}{2}\int_{\Omega_1^\ell} \left\{\sum_{j=1}^{n-1}\left(\frac{\vep_k}{1+\vep_k z \kgam_j} \right)^2 
	\left|\frac{\partial\tu_k}{\partial s_j} \right|^2 
	+ \left| \frac{\partial \tu_k}{\partial z}\right|^2\right\}J dsdz \leq M.
\end{align}
Since $|\kgam_j|\leq \kgam_0$ and $|z|\leq  \ell$ on $\Omega_1^\ell$, we have the uniform estimate
$$ \frac{2\vep_k}{3}\leq \frac{\vep_k}{1 +\vep_k z \kgam_j} \leq 2\vep_k,$$
for all $k$ such that $\vep_k<1/(2l\kgam_0)$, 
and hence  $\frac{\partial\tu_k}{\partial z}$ is uniformly bounded while $\frac{\partial\tu_k}{\partial s_j}$
can grow at most as fast $\vep_k^{-1}.$ More specifically we have established that
\begin{align}
		\disp \left\| \frac{\partial \tu_k}{\partial s_j}\right\|_{L^2(\Omega_1^\ell)} 
		& \leq \frac{3\sqrt{M}}{\sqrt{2\eta_1}\vep_k}, \label{Ds-u-bound}\\
		\disp \left\| \frac{\partial \tu_k}{\partial z}\right\|_{L^2(\Omega_1^\ell)}  
		&\leq \frac{\sqrt{2M}}{\sqrt{\eta_1}}. \label{Dz-u-bound}
\end{align}
Taken together, (\ref{u-bound}), (\ref{Ds-u-bound}), and (\ref{Dz-u-bound}) imply (\ref{T1-B1}).

To establish the convergence to a weak solution of the bilayer equation, we impose the enhanced constraint 
and (\ref{T1-B2}) return to the first term on the left-hand side of (\ref{upper-bound0}), deducing that
	\begin{align}      
     	\left\| -\vep_k\tilde\Delta\tu_k + \frac1{\vep_k} W'(\tu_k)  
	\right\|_{L^2(\Omega_1^\ell)} \leq 2\sqrt{M}.
	\label{term1-bound}
	\end{align}
Using \qref{def-tildeDelta-tu} to expand the left-hand side of (\ref{term1-bound}), 
we group terms by formal powers of $\vep$ , 
	\begin{align*}
		-\vep_k\tilde\Delta\tu_k + \frac1{\vep_k} W'(\tu_k) 
		&= -\frac1{\vep_k}\frac{\partial^2\tu_k}{\partial z^2} + \frac1{\vep_k} W'(\tu_k) 
     	-\sum_{j=1}^{n-1} \frac{\kgam_j}{1+\vep_k z\kgam_j}\frac{\partial\tu_k}{\partial z}  \nn\\
	&\qquad-\vep_k \sum_{j=1}^{n-1}\frac{1}{(1+\vep_k z\kgam_j)^2}\frac{\partial^2 \tu_k}{\partial s_j^2} 
	+\vep_k^2 \sum_{j=1}^{n-1} \frac{\partial \kgam_j}{\partial s_j}\frac{1}{(1+\vep_k z \kgam_j)^3} 
	\frac{\partial \tu_k}{\partial s_j},
	\end{align*}
and employ the triangle inequality
	\begin{align}
		&\left\| \frac1{\vep_k}\left(-\frac{\partial^2\tu_k}{\partial z^2} + W'(\tu_k)\right)  
		-\vep_k \sum_{j=1}^{n-1}\frac{1}{(1+\vep_k z\kgam_j)^2}\frac{\partial^2 \tu_k}{\partial s_j^2}
		\right\|_{L^2(\Omega_1^\ell)} \nn\\
		\leq& \left\| -\vep_k\tilde\Delta\tu_k + \frac1{\vep_k} W'(\tu_k)  \right\|_{L^2(\Omega_1^\ell)} 
			+ \sum_{j=1}^{n-1} \left\| \frac{\kgam_j}{1+\vep_k z\kgam_j} \right\|_{L^\infty(\Omega_1^\ell)}
			\left\| \frac{\partial\tu_k}{\partial z} \right\|_{L^2(\Omega_1^\ell)} \nn\\
		& + \vep_k^2 \sum_{j=1}^{n-1}\left\| \frac{\partial \kgam_j}{\partial s_j} \frac{1}{(1+\vep_k z \kgam_j)^3}
			\right\|_{L^\infty(\Omega_1^\ell)}
			\left\| \frac{\partial \tu_k}{\partial s_j} \right\|_{L^2(\Omega_1^\ell}.
	\end{align}
Combining the uniform bounds on $\vep_k$ over $k\geq 1$ and on $\kgam_j$ and $\frac{\partial \kgam_j}{\partial s_j}$ for $j=1,\ldots, n-1$, with the bounds 
\qref{Ds-u-bound}, \qref{Dz-u-bound}, and \qref{term1-bound}, imply the existence of a constant $M_1>0$
such that for all $k\in\N$
	\begin{align}
		&\left\| \frac1{\vep_k}\left(-\frac{\partial^2\tu_k}{\partial z^2} + W'(\tu_k)\right)  
		-\vep_k \sum_{j=1}^{n-1}\frac{1}{(1+\vep_k z\kgam_j)^2}\frac{\partial^2 \tu_k}{\partial s_j^2}
		\right\|_{L^2(\Omega_1^\ell)} \leq M_1.
	\end{align}
The assumption (\ref{T1-B2}) implies that the tangential second derivatives scale as $O(\vep_k^{-1})$ and are lower order. Moving
them to the right-hand side we conclude that
	\begin{align}
		\left\| \frac1{\vep_k}\left(-\frac{\partial^2\tu_k}{\partial z^2} + W'(\tu_k)\right)  
		\right\|_{L^2(\Omega_1^\ell)} 
		&\leq M_1 + \vep_k \sum_{j=1}^{n-1}\left\|\frac{1}{(1+\vep_k z\kgam_j)^2}\right\|_{L^\infty(\Omega_1^\ell)}
		\left\|\frac{\partial^2 \tu_k}{\partial s_j^2}
		\right\|_{L^2(\Omega_1^\ell)} \nn\\
		&\leq M_2,
	\end{align}
for some constant $M_2>0$ independent of $\vep,$ and consequently 
	\begin{align} \label{conv-1}
		\left\| -\frac{\partial^2\tu_k}{\partial z^2} + W'(\tu_k)  \right\|_{L^2(\Omega_1^\ell)} \leq M_2\vep_k\to 0
	\end{align}
	as $k\to\infty$.
We use this convergence to establish the equi-partition, (\ref{equi-partition}) (i). Since all terms are zero at $z=\pm \ell$ we have the bound
\begin{align*}
	&\left| \frac12 \left(\frac{\partial \tu_k}{\partial z}(z) \right)^2 - W(\tu_k(z)) \right| 
	=\left| \int_{-l}^z \left( \frac{\partial^2\tu_k}{\partial z^2}(\zeta) - W'(\tu_k(\zeta))\right)
	\frac{\partial\tu_k}{\partial z}(\zeta)\,d\zeta\right| \nn\\
	&\leq \left(\int_{-l}^l \left( \frac{\partial^2\tu_k}{\partial z^2}(\zeta) - W'(\tu_k(\zeta)\right)^2d\zeta\right)^{1/2}
	\left( \int_{-l}^l \left|\frac{\partial\tu_k}{\partial z}(\zeta)\right|^2d\zeta\right)^{1/2},
\end{align*}
from which we deduce the $L^1$ estimate
\begin{align*}
	&\int_{\Omega_1^\ell} \left| \frac12 \left(\frac{\partial \tu_k}{\partial z}(z) \right)^2 - W(\tu_k(z)) \right| dsdz,
	\nn\\
	&\leq \left( \int_{\Omega_1^l}\int_{-l}^l\left( \frac{\partial^2\tu_k}{\partial z^2}(\zeta) - W'(\tu_k(\zeta))\right)^2
	d\zeta dsdz\right)^{1/2}\!\!
	\left( \int_{\Omega_1^\ell}\int_{-l}^l \left|\frac{\partial\tu_k}{\partial z}(\zeta)\right|^2d\zeta dsdz\right)^{1/2} \!\!,
	\nn\\
	&\leq 2l \left\| -\frac{\partial^2\tu_k}{\partial z^2} + W'(\tu_k)  \right\|_{L^2(\Omega_1^\ell)} 
	\left\| \frac{\partial\tu_k}{\partial z} \right\|_{L^2(\Omega_1^\ell)}. \end{align*}
From \qref{T1-B1} and \qref{conv-1} we see that the right-hand side tends to zero as $k\to\infty$ and we deduce (i).

To establish that the limit function $\tu_*$ is a weak solution of the bilayer equation, (ii)-\qref{T1-BL}, we recall that assumption (\ref{T1-B2}) implies that the first 
order tangential derivatives  $\frac{\partial \tu_k}{\partial s_j}$ are uniformly bounded in $L^2(\Omega_1^\ell)$ for $j=1,\cdots, n-1$ and all $k\in\N.$  
This fact, in conjunction with \qref{u-bound} and \qref{Dz-u-bound}, implies that $\tu_k$ is bounded in $H^1(\Omega_1^\ell)$. We deduce that there exists 
a subsequence 
$\tu_{k_i}$ and a function $\tu^*\in H^1(\Omega_1^\ell)$ such that 
\begin{align} \label{conv-2}
		\tu_{k_i} \rightharpoonup \tu^* \quad\mbox{weakly in } H^1(\Omega_1^\ell).
\end{align}
Moreover the Sobolev embedding and compact embedding theorems imply that $\tu_k$ is bounded in $L^{2^*}(\Omega_1^\ell)$
and  $\tu_{k_i}\to \tu^*$ strongly in $L^q(\Omega_1^\ell)$ 	for any  $1\leq q < 2^*:=2n/(n-2)$. 
Extracting a further subsequence, not relabelled, 
we have that $\tu_{k_i}\to \tu^*$ a.e. in $\Omega_1^\ell$. Since $\tu_k$ is bounded in  $L^p(\Omega_1^\ell)$, 
we can improve the strong convergence so that 
	\begin{align}
		\tu_{k_i}\to \tu^*\quad\mbox{ strongly in } L^q(\Omega_1^\ell) 
		\mbox{ for any } 1\leq q <q_0:=\max\{p, 2n/(n-2)\}. 
	\end{align}
Proceeding, form the $H^1$ weak convergence, for any $\phi\in H_0^1(\Omega_1^\ell)$, we have
	\begin{align}
		\lim_{i\to\infty}\int_{\Omega_1^\ell} \frac{\partial\tu_{k_i}}{\partial z} \frac{\partial\phi}{\partial z}\,Jdsdz 
		&= \int_{\Omega_1^\ell} \frac{\partial\tu^*}{\partial z} \frac{\partial\phi}{\partial z}\,Jdsdz. 
		\label{weak-eq-part1}
	\end{align}
Since $W'(u)$ is continuous in $u$, and $|W'(u)|\leq C |u|^{p-1}$ as $|u|\to\infty$, the strong convergence of 
$\tu_{k_i}\to \tu^*$ in $L^{q}(\Omega_1^\ell)$  for any $q\in[p-1,q_0)$ 
and a.e. convergence in $\Omega_1^\ell$, together with the
Generalized Dominated Convergence Theorem imply that for any $q\in[p-1,q_0)$, 
\begin{align}\label{W'-conv-1}
	W'(\tu_{k_i}) \to W'(\tu^*) \quad\mbox{strongly in }L^{q/(p-1)}(\Omega_1^\ell)\mbox{ and a.e. in }
	\Omega_1^\ell.
\end{align}
Since $W'(\tu_{k_i})$ is bounded in $L^{q_0/(p-1)}(\Omega_1^\ell)$, by extracting a further 
subsequence if necessary, 
by \qref{W'-conv-1}, we obtain
\begin{align}\label{W'-conv-2}
	W'(\tu_{k_i}) \rightharpoonup W'(\tu^*) \quad\mbox{weakly in }L^{q_0/(p-1)}(\Omega_1^\ell).
\end{align}
Thus
\begin{align}
 \lim_{i\to\infty} \int_{\Omega_1^\ell}W'(\tu_{k_i})\phi\,dx = \int_{\Omega_1^\ell} W'(\tu^*)\phi\,dx,
\label{weak-eq-part2}
\end{align}
for any $\phi\in L^r(\Omega_1^\ell)$ with $r= q_0/(q_0-p+1)$. Combining \qref{weak-eq-part1} and 
\qref{weak-eq-part2}, we deduce that
	\begin{align}
		\int_{\Omega_1^\ell} \left(\frac{\partial\tu^*}{\partial z} \frac{\partial\phi}{\partial z}+ W'(\tu^*)\phi\right)
			\,Jdsdz 
		&= \lim_{i\to\infty} \int_{\Omega_1^\ell} \left(\frac{\partial\tu_{k_i}}{\partial z} \frac{\partial\phi}{\partial z} 
			+ W'(\tu_{k_i})\phi\right) \, Jdsdz\nn\\
		&= \lim_{i\to\infty} \int_{\Omega_1^\ell} \left(-\frac{\partial^2\tu_{k_i}}{\partial z^2}  
		+ W'(\tu_{k_i})\right)\phi \, Jdsdz \nn\\
		&=0,
	\end{align}
for any $\phi\in H_0^1(\Omega_1^\ell)\cap L^r(\Omega_1^\ell)$.
We remark that if $p\leq 2^*$, then $q_0 = 2^*$ and $r\leq 2^*$, so that the Sobolev embedding theorem implies
that $H_0^1(\Omega_1^\ell)\cap L^r(\Omega_1^\ell)= H_0^1(\Omega_1^\ell)$. On the other hand, if $p>2^*$, then $q_0=p$, $r=p$, 
and  $H_0^1(\Omega_1^\ell)\cap L^r(\Omega_1^\ell) = H_0^1(\Omega_1^\ell)\cap L^p(\Omega_1^\ell)$.

\end{proof}


\section{Upper and lower bounds on codimension-one sequences}
A key goal of our analysis is to identify properties of codimension-one $\ell$-bounded energy sequences with characterize the form of their limiting energy. Theorem 3.1 establishes a condition on the tangential derivatives which guarantees that such sequences have subsequences which converge to weak solutions of the bilayer equation. We show that the asymptotic scaling of the tangential derivatives is essential
to the form of the limiting energy.

\subsection{Codimension-one lower bounds} 
We establish that a slightly stronger constraint on the tangential derivatives  imposes a class of
codimension-one lower bounds for the energy of each subsequence that has an $H^1$ weak limit. 
More specifically if $\tu^*\in H^1(\Omega_1^\ell)$ is a weak solution of the bilayer equation in the sense of \qref{weak-BL}, then we 
define the associated codimension-one energy
\beq
\label{def-CD1-Energy}
	G_1(\Gamma,u^*):= \int_\Gamma \left(\tilde a^*H_0^2 -(\eta_1+\eta_2)\tilde b^*\right) ds,
\eeq
where $H_0:=\sum_{j=1}^{n-1} \kgam_j$ is the total curvature of $\Gamma$, and 
\begin{align} 
	\tilde a^*(s) &:= \frac12\int_{-\ell}^\ell  \left| \frac{\partial\tu^*}{\partial z}\right|^2\,dz, \label{def-a*}\\
	 \tilde b^*(s)&:=\int_{-\ell}^\ell W(\tu^*) dz. \label{def-b*}
\end{align}
While this result falls short of establishing a unique codimension-one limiting energy it establishes a liminf inequality for
a class of codimension one $\ell$-bounded sequences. 

\begin{theorem} \label{sec3-T2}
Suppose $p<2^*$ and let $\{\tu_k\}_{k=1}^\infty$ be a codimension-one $\ell$-bounded sequence satisfying the assumptions of 
Theorem \ref{sec3-T1}; in particular \qref{T1-B1} holds. If in addition we strengthen assumption \qref{T1-B2} to include a stronger bound 
on the second tangential derivatives,
\begin{align}
\label{T2-B3}
 \sum\limits_{j=1}^{n-1} \left\| \frac{\partial^2\tu_k}{\partial s_j^2}\right\|_{L^2(\Omega_1^\ell)} = o(\vep_k^{-1}),
\end{align}
then for any subsequence $\tu_{\kgam_j}$ that  converges weakly to a function $\tu^*$ in $H_0^1(\Omega_1^\ell)$, 
we have the following codimension-one liminf inequality:
\begin{align}\label{liminf-ineq}
	\liminf_{j\to\infty}\tF_{\vep_{\kgam_j}}^\ell(\tu_{\kgam_j}) \geq  G_1(\Gamma,\tu^*).
\end{align}
\end{theorem}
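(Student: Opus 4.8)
The plan is to write $\tF_{\vep_k}^\ell(\tu_k)=I_k-II_k-III_k$ with $I_k:=\tfrac12\int_{\Omega_1^\ell}\big(-\vep_k\tilde\Delta\tu_k+\tfrac1{\vep_k}W'(\tu_k)\big)^2 J\,dsdz$, $II_k:=\tfrac{\eta_1\vep_k^2}{2}\int_{\Omega_1^\ell}|\tilde D\tu_k|^2 J\,dsdz$ and $III_k:=\eta_2\int_{\Omega_1^\ell}W(\tu_k)J\,dsdz$, pass to the given weakly convergent subsequence (relabelled $\tu_k\rightharpoonup\tu^*$ in $H_0^1(\Omega_1^\ell)$), and show separately that $II_k\to\eta_1\int_\Gamma\tilde a^*\,ds$ and $III_k\to\eta_2\int_\Gamma\tilde b^*\,ds$ while $\liminf_k I_k\ge\int_\Gamma H_0^2\tilde a^*\,ds$; summing these gives \qref{liminf-ineq}. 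The sequence satisfies the equi-partition \qref{equi-partition}, and being bounded in $H^1\cap L^p$ it converges to $\tu^*$ strongly in $L^q(\Omega_1^\ell)$ for every $q<2^*$ (hence in $L^p$, using $p<2^*$) and a.e.; moreover $\tu^*$ is a weak solution of the bilayer equation by Theorem \ref{sec3-T1}.

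First I would establish the rigidity $\tilde a^*=\tilde b^*$ and upgrade the convergence of $\partial_z\tu_k$. Equi-partition together with $W(\tu_k)\to W(\tu^*)$ in $L^1$ gives $\int_{\Omega_1^\ell}(\partial_z\tu_k)^2\,dsdz\to 2\int_{\Omega_1^\ell}W(\tu^*)\,dsdz=2\int_Q\tilde b^*\,ds$, so weak $L^2$ lower semicontinuity yields $\int_Q\tilde a^*\,ds\le\int_Q\tilde b^*\,ds$. On the other hand, testing \qref{weak-BL} with separated functions $\psi(s)\chi(z)$ shows that for a.e.\ $s$ the profile $\tu^*(s,\cdot)\in H^2(-\ell,\ell)\cap H_0^1(-\ell,\ell)$ solves $-\partial_z^2\tu^*+W'(\tu^*)=0$, so its first integral $\tfrac12(\partial_z\tu^*)^2-W(\tu^*)$ is constant in $z$ and, evaluated at $z=\pm\ell$ where $\tu^*=0$, equals $\tfrac12(\partial_z\tu^*(s,\pm\ell))^2\ge0$; integrating in $z$ gives $\tilde a^*(s)\ge\tilde b^*(s)$ pointwise. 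Hence $\tilde a^*=\tilde b^*$ a.e., so $\int(\partial_z\tu_k)^2\,dsdz\to\int(\partial_z\tu^*)^2\,dsdz$, which with $\partial_z\tu_k\rightharpoonup\partial_z\tu^*$ forces $\partial_z\tu_k\to\partial_z\tu^*$ strongly in $L^2(\Omega_1^\ell)$. Using this, $J\to1$ uniformly, the $L^p$-convergence of $\tu_k$, and the $L^2$-bound on $\partial_{s_j}\tu_k$ from \qref{T1-B2} (which removes the $O(\vep_k^2)$ tangential part of $|\tilde D\tu_k|^2$), one gets $II_k\to\eta_1\int_\Gamma\tilde a^*\,ds$ and $III_k\to\eta_2\int_\Gamma\tilde b^*\,ds$.

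For $I_k$, plain weak lower semicontinuity of $\|\cdot\|_{L^2}^2$ is \emph{not} sufficient, because the weak $L^2$ limit of $\vep_k^{-1}(-\partial_z^2\tu_k+W'(\tu_k))$ is only bounded (by \qref{term1-bound}), not identified. Instead I would apply, for a.e.\ $s$, the weighted Cauchy--Schwarz inequality in $z$,
\[
 \int_{-\ell}^{\ell}\Big(-\vep_k\tilde\Delta\tu_k+\tfrac1{\vep_k}W'(\tu_k)\Big)^{2}J\,dz\;\ge\;\frac{\Big(\int_{-\ell}^{\ell}\big(-\vep_k\tilde\Delta\tu_k+\tfrac1{\vep_k}W'(\tu_k)\big)\partial_z\tu_k\,dz\Big)^{2}}{\int_{-\ell}^{\ell}(\partial_z\tu_k)^{2}J^{-1}\,dz},
\]
(interpreting the right side as $0$ where $\partial_z\tu_k(s,\cdot)\equiv0$). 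The point is that in the numerator the $\vep_k^{-1}$-order contributions integrate to exact boundary terms that vanish: $\int_{-\ell}^{\ell}\vep_k^{-1}W'(\tu_k)\partial_z\tu_k\,dz=\vep_k^{-1}[W(\tu_k)]_{-\ell}^{\ell}=0$ since $\tu_k(s,\pm\ell)=0$, and the $\vep_k^{-2}\partial_z^2\tu_k$ piece of $-\vep_k\tilde\Delta\tu_k$ contributes $-\tfrac1{2\vep_k}[(\partial_z\tu_k)^2]_{-\ell}^{\ell}=0$ by the no-flux condition $\partial_z\tu_k(s,\pm\ell)=0$ inherited from the $H_0^2$ boundary data of the geometrically localized minimizers. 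What survives is $-\sum_j\int_{-\ell}^{\ell}\tfrac{\kgam_j}{1+\vep_k z\kgam_j}(\partial_z\tu_k)^2\,dz$, while the tangential second-- and first--derivative terms are $o(1)$ in $L^1(Q)$ by \qref{T2-B3} and \qref{T1-B2}; using the strong $L^2$-convergence of $\partial_z\tu_k$ and $\sum_j\kgam_j=H_0$, the numerator tends to $(2H_0\tilde a^*)^2$ and the denominator to $2\tilde a^*$ in $L^1(Q)$, hence a.e.\ along a further subsequence. Fatou's lemma then gives $\liminf_k I_k\ge\tfrac12\int_Q(2H_0\tilde a^*)^2/(2\tilde a^*)\,ds=\int_\Gamma H_0^2\tilde a^*\,ds$ (on $\{\tilde a^*=0\}$ one just uses $I_k\ge0$). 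Combining with the limits of $II_k,III_k$ and the rigidity $\tilde a^*=\tilde b^*$,
\[
 \liminf_k \tF_{\vep_k}^\ell(\tu_k)\;\ge\;\int_\Gamma H_0^2\tilde a^*\,ds-\eta_1\int_\Gamma\tilde a^*\,ds-\eta_2\int_\Gamma\tilde b^*\,ds\;=\;\int_\Gamma\big(H_0^2\tilde a^*-(\eta_1+\eta_2)\tilde b^*\big)ds=G_1(\Gamma,\tu^*).
\]

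The main obstacle is precisely this squared term: one must avoid passing to the weak limit inside the square (which discards the curvature energy entirely) and instead discover the test function $\partial_z\tu_k$ in the weighted Cauchy--Schwarz bound, chosen so that every $\vep_k^{-1}$-order term in the resulting $z$-integral becomes an \emph{exact} boundary term annihilated by the homogeneous boundary conditions --- this is where the no-contact/no-flux structure enters decisively, rather than merely asymptotically. The secondary technical point is the rigidity $\tilde a^*=\tilde b^*$: it is needed both to pin down the limit of $II_k+III_k$ and to turn the weak $L^2$ convergence of $\partial_z\tu_k$ into strong convergence, without which neither the sharp coefficient $2\tilde a^*$ in the Cauchy--Schwarz denominator nor the limits of the lower-order terms would be available.
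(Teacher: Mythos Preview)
Your proof is correct but takes a different and more elaborate route than the paper's. The paper handles the squared term $I$ by directly expanding the square into three pieces: a curvature piece $I_1=\tfrac12\int H_0^2(\partial_z\tu_k)^2 J$, a nonnegative piece $I_2=\tfrac{1}{2\vep_k^2}\int(-\partial_z^2\tu_k+W'(\tu_k))^2 J$, and a cross term $I_3$. The cross term is rewritten as a total $z$-derivative of $\tfrac12(\partial_z\tu_k)^2-W(\tu_k)$, integrated by parts against $\partial_z J$ (using the same boundary conditions you invoke), and killed by equipartition; $I_2\ge 0$ is simply discarded; and ordinary weak lower semicontinuity is applied to $I_1$ alone, whose weak limit \emph{is} identified since $H_0\partial_z\tu_k\rightharpoonup H_0\partial_z\tu^*$. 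For $II$ the paper again uses equipartition to replace $\tfrac12|\partial_z\tu_k|^2$ by $W(\tu_k)$ and then the strong $L^1$ convergence of $W(\tu_k)$. Thus the paper never needs strong convergence of $\partial_z\tu_k$ or the identity $\tilde a^*=\tilde b^*$; your worry that the weak $L^2$ limit of $\vep_k^{-1}(-\partial_z^2\tu_k+W'(\tu_k))$ is not identified is addressed simply by dropping that term via its sign after expanding the square. On the other hand, your approach proves more: your rigidity step---obtaining $\tilde a^*(s)\ge\tilde b^*(s)$ pointwise from the first integral of the bilayer ODE on each whisker and combining it with the reverse integral inequality from weak lower semicontinuity---establishes $\tilde a^*=\tilde b^*$ and hence strong $L^2$ convergence of $\partial_z\tu_k$, which the paper explicitly leaves open in the paragraph following the proof. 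So the paper's argument is shorter for the $\liminf$ inequality itself, while your detour through the whisker ODE and weighted Cauchy--Schwarz buys an additional structural result about the limit profile.
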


\begin{proof} 
Without loss of generality, we assume $\tu_k \rightharpoonup \tu^*$ in $H_0^1(\Omega_1^\ell)$ and $\tu_k$ satisfies all 
assumptions in the theorem. We write the energy integral as the difference of the quadratic and functionalization terms,
\begin{align}
	\tF_{\vep_k}^\ell(\tu_k) &= \int_{\Omega_1^\ell}\left\{\frac1{2}\left( -\vep_k\tilde\Delta \tu_k
	+\frac1{\vep_k} W'(\tu_k)\right)^2
      - \left(\frac{\eta_1\vep_k^2}{2}|\tilde D \tu_k|^2 + {\eta_2}W(\tu_k) \right) \right\}Jdsdz \nn\\
      &=\hspace{1.3in}I\hspace{0.65in} - \hspace{0.8in}I\!I.
\end{align}
Using the Laplacian expansion \qref{def-tildeDelta-tu} we rewrite $I$ as
\begin{align}
	I &= \int_{\Omega_1^\ell}\frac1{2}\left( -\vep_k\tilde\Delta \tu_k+\frac1{\vep_k} W'(\tu_k)\right)^2\,Jdsdz \nn\\
	&= \int_{\Omega_1^\ell}\frac12\left\{-\vep_k\sum_{j=1}^{n-1}\frac{1}{(1+\vep_k z\kgam_j)^2}
	\frac{\partial^2 \tu_k}{\partial s_j^2} 
  	- \sum_{j=1}^{n-1} \frac{\kgam_j}{1+\vep_k z\kgam_j}\frac{\partial\tu_k}{\partial z}  \right.\nn\\
 	&\quad\quad\left. +\vep_k^2 \sum_{j=1}^{n-1} \frac{\partial \kgam_j}{\partial s_j}\frac{1}{(1+\vep_k z \kgam_j)^3} 
	\frac{\partial \tu_k}{\partial s_j}
		+ \frac1{\vep_k}\left(-\frac{\partial^2\tu_k}{\partial z^2}+ W'(\tu_k)\right)\right\}^2\,Jdsdz.
\end{align}
From \qref{T1-B2} and \qref{T2-B3}, it is easy to see that tangential derivative
terms, the first and third terms in the quadratic expression, tend to zero in $L^2(\Omega_1^\ell)$ as $k\to\infty,$
while from From \qref{Dz-u-bound} and \qref{conv-1}, the through-plane derivative terms, the second and fourth terms,
are uniformly bounded in $L^2(\Omega_1^\ell).$ It follows that the tangential terms may be neglected in the $k\to\infty$ limit.
Moreover 
\[ \sum_{j=1}^{n-1} \frac{\kgam_j}{1+\vep_k z\kgam_j} \to H_0, \]
in $L^\infty(\Omega_1^\ell)$  as $k\to\infty$ and we may make this replacement in the limit, observing that
\begin{align}
	\lim\limits_{k\to\infty} I &=\lim\limits_{k\to\infty} \int_{\Omega_1^\ell} \left\{ \frac12 H_0^2 \left(\frac{\partial\tu_k}{\partial z} \right)^2 
	+ \frac1{2\vep_k^2}	\left( - \frac{\partial^2\tu_k}{\partial z^2} + W'(\tu_k)\right)^2 \right.\nn \\
	&\hspace{1.5in}\left. - \frac1{\vep_k}  H_0 \frac{\partial\tu_k}{\partial z} \left( - \frac{\partial^2\tu_k}{\partial z^2} + W'(\tu_k)\right) \right\}\,Jdsdz,\nn\\
	&=: I_1+I_2+I_3.
\end{align}
	

	

Addressing these terms one-by-one we find,
\begin{align}
	\lim_{k\to\infty} I_3 
	&=\lim_{k\to\infty} -\frac{1}{\vep_k} \int_{\Gamma} H_0 \int_{-\ell}^\ell 
	\frac{\partial\tu_k}{\partial z} \left( - \frac{\partial^2\tu_k}{\partial z^2} + W'(\tu_k)\right)J(s,z)dzds \nn\\
	&= \lim_{k\to\infty} -\frac{1}{\vep_k} \int_{\Gamma} H_0 \int_{-\ell}^\ell 
	\frac{\partial}{\partial z} \left( -\frac12\left| \frac{\partial \tu_k}{\partial z}\right|^2 + W(\tu_k)\right)J(s,z) dzds\nn\\
	&= \lim_{k\to\infty} \frac{1}{\vep_k} \int_{\Gamma} H_0 \int_{-\ell}^\ell 
	\left( -\frac12\left| \frac{\partial \tu_k}{\partial z}\right|^2 + W(\tu_k)\right)\partial_zJ(s,z) dzds,
\end{align}
where we used $W(\tu_k(\pm l))=W(0)=0$ and $\frac{\partial \tu_k}{\partial z}(\pm l) =0$ in the integration by parts step.
Recalling the form of the Jacobian, \qref{def-Jac}, and the definitions \qref{def-a*} and \qref{def-b*} we find that
\beq
\label{I3-lim}
 \lim_{k\to\infty} I_3 =\lim_{k\to\infty} \int_{\Gamma} H_0^2(s)
 \int_{-\ell}^\ell \left( -\frac12\left| \frac{\partial \tu_k}{\partial z}\right|^2 + W(\tu_k)\right)\,dzds = 0,
 \eeq
the last conclusion relies on equipartition, \qref{equi-partition}.
 

For $I_1$, since $\tu_k\rightharpoonup \tu^*$ 
weakly in 
$H^1(\Omega_1^\ell)$, we have
\begin{align}
	\sum_{j=1}^{n-1} \frac{\kgam_j}{1+\vep_k z\kgam_j}\frac{\partial\tu_k}{\partial z} \rightharpoonup 
	\frac{\partial\tu^*}{\partial z}\sum_{j=1}^{n-1} \kgam_j = \frac{\partial\tu^*}{\partial z} H_0
	\quad\mbox{weakly in } H^1(\Omega_1^\ell).
\end{align}
By weak lower semicontinuity and the strong convergence of $J$ as $\vep\to 0$ we deduce that
\begin{align} \label{liminf-1}
	\liminf_{k\to\infty} I_1 \geq \frac12\int_{\Omega_1^\ell} \left(\frac{\partial\tu^*}{\partial z} H_0 \right)^2 J\bigl|_{\vep=0}dsdz
	= \int_\Gamma \tilde a^*H_0^2ds . 
\end{align}
For $I_2$ we merely observe that it is positive and bounded below by zero, hence $\liminf_{k\to\infty} I_2\geq 0.$
For $I\!I$, we have 
\begin{align}
	 I\!I &= \!\!\int_{\Omega_1^\ell} \left\{
	  \frac{\eta_1}2\left|\frac{\partial\tu_k}{\partial z}\right|^2 
	+\eta_2 W(\tu_k) 
	  + \frac{\eta_1\vep_k^2}2\sum_{j=1}^{n-1} \frac{1}{(1+\vep_k z \kgam_j)^2} 
	\left|\frac{\partial\tu_k}{\partial s_j}\right|^2\right\}Jdsdz.
\end{align}
Since $p<2^*$, by the compact embedding theorem we have $\tu_k\to \tu^*$ strongly in $L^p(\Omega_1^\ell)$, 
and hence
$W(\tu_k)\to W(\tu^*)$ strongly in $L^1(\Omega_1^\ell)$.
From the strong convergence of the tangential derivatives, \qref{T1-B2},  we have
\[ \lim_{k\to\infty} \int_{\Omega_1^\ell} \frac{\eta_1\vep_k^2}2\sum_{j=1}^{n-1} \frac{1}{(1+\vep_k z \kgam_j)^2} 
	\left|\frac{\partial\tu_k}{\partial s_j}\right|^2\,dsdz =0.\] 
Finally equipartition, \qref{equi-partition}, allows us to deduce that
\begin{align}\label{liminf-2}
	\lim\limits_{k\to\infty} I\!I  &= \lim\limits_{k\to\infty} \int_{\Omega_1^\ell} \left\{
	\frac{\eta_1}2\left|\frac{\partial\tu_k}{\partial z}\right|^2 
	+\eta_2 W(\tu_k) \right\}\,Jdsdz, \nn\\
	&=(\eta_1+\eta_2) \lim_{k\to\infty}\int_{\Omega_1^\ell} \ W(\tu_k)\, Jdsdz, \nn\\
	&=(\eta_1+\eta_2) \int_{\Gamma} b^*\,ds.
\end{align}
Combining \qref{I3-lim}, \qref{liminf-1} and \qref{liminf-2} we obtain \qref{liminf-ineq}.
\end{proof}

In general $\tilde a^*$ and $\tilde b^*$ may depend on $s$. 
Since we have the limiting equipartition of energy, \qref{equi-partition}, ideally we would hope to 
keep the equipartition of energy in the limiting profile $\tu^*$, i.e., $\tilde a^* = \tilde b^*$. However, 
by the weak convergence $\tu_{k_j}\rightharpoonup \tilde u^*$ in
 $H_0^1(\Omega_1^\ell)$, and the strong convergence $W(\tu_{k_j})\to W(\tu^*)$ in $L^1(\Omega_1^\ell)$, 
 we can only obtain 
\[ 0\leq \int_\Gamma \tilde a^*(s)\,ds \leq \int_\Gamma \tilde b^*(s)\,ds.\]
Indeed we have the following string of inequalities
\begin{align}  
\int_\Gamma \tilde a^*(s)\,ds&=\int_{\Omega_1^\ell}  \left| \frac{\partial\tu^*}{\partial z} \right|^2 dzds 
 \leq \liminf_{j\to\infty} \int_{\Omega_1^\ell}  \left| \frac{\partial\tu_{k_j}}{\partial z} \right|^2 Jdzds \nn\\
 &=\liminf_{j\to\infty} \int_{\Omega_1^\ell}W(\tu_{k_j})\,J dzds 
 =\int_{\Omega_1^\ell} W(\tu^*)\,dzds = \int_\Gamma \tilde b^*(s)\,ds.\nn
 \end{align}
It is an interesting question to explore the possible loss of energy in the weak convergence. This is left for future 
studies. 

We define  $\cS(\Gamma,\ell)$ to consist of the set of $\tu\in H^1(\Omega_1^\ell)$ that are weak solutions
of the bilayer equation, and introduce the set $\overline{\{\tu_k\}}^{H^1}$ of functions $\tu\in H^1(\Omega_1^\ell)$ which
are $H^1$ weak limits of a subsequence of $\{\tu_k\}$. With this notation we reformulate Theorem 4.1.
\begin{corollary}
Fix a codimension-one interface $\Gamma$ satisfying \qref{Gamma-bounds}. Every codimension-one $\ell$-bounded sequence $\{\tu_k\}_{k=1}^\infty$ sequence that
satisfies \qref{T1-B2} and \qref{T2-B3} has the lower bound
\begin{align}
	\liminf_{k\to\infty} \tF_{\vep_k}^\ell(\tu_k) \geq \sup \left\{G_1(\Gamma,\tu^*)\,\bigl| \,\tu^*\in\cS(\Gamma,\ell)\cap \overline{\{\tu_k\}}^{H^1}\right\}.
\end{align}
\end{corollary}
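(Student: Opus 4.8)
The plan is to upgrade Theorem~\ref{sec3-T2} from a statement about one weakly convergent subsequence to one about the full sequence, using the $H^1$-compactness and bilayer identification of Theorem~\ref{sec3-T1}. Set $L:=\liminf_{k\to\infty}\tF_{\vep_k}^\ell(\tu_k)$; it is finite by \qref{tF^b-lowerbound}. Since \qref{T1-B1}--\qref{T1-B2} bound $\{\tu_k\}$ in $H^1(\Omega_1^\ell)$, every subsequence admits a further subsequence converging weakly in $H^1(\Omega_1^\ell)$, and by Theorem~\ref{sec3-T1}(ii) each such limit solves the bilayer equation, so $\cS(\Gamma,\ell)\cap\overline{\{\tu_k\}}^{H^1}$ is nonempty. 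First I would fix $\tu^*$ in this set, choose a subsequence $\tu_{k_j}\rightharpoonup\tu^*$, note that $\ell$-boundedness together with \qref{T1-B2} and \qref{T2-B3} are inherited by subsequences, and apply Theorem~\ref{sec3-T2} to get $\liminf_{j\to\infty}\tF_{\vep_{k_j}}^\ell(\tu_{k_j})\geq G_1(\Gamma,\tu^*)$.

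Next I would extract from $\{\tu_k\}$ a subsequence realizing $L$, and from it a further subsequence $\tu_{k_i}\rightharpoonup\tu^\dagger$ in $H^1(\Omega_1^\ell)$. Theorem~\ref{sec3-T1}(ii) places $\tu^\dagger$ in $\cS(\Gamma,\ell)$, and $\tu^\dagger\in\overline{\{\tu_k\}}^{H^1}$ by construction, so Theorem~\ref{sec3-T2} applied to this last subsequence gives $L=\liminf_{i\to\infty}\tF_{\vep_{k_i}}^\ell(\tu_{k_i})\geq G_1(\Gamma,\tu^\dagger)$. At this stage I only have $L\geq G_1(\Gamma,\tu^\dagger)$ for one distinguished limit, whereas the Corollary demands that $L$ dominate $G_1(\Gamma,\tu^*)$ for \emph{every} $\tu^*$ in the set; Theorem~\ref{sec3-T2} controls only the $\liminf$ along the subsequence attached to $\tu^*$, which need not realize $L$.

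To bridge that gap I would argue that all the attainable bilayer limits carry the same codimension-one energy. Any $\tu^*\in\cS(\Gamma,\ell)$ satisfies, for a.e.\ $s$, the scalar problem $-\tu^*_{zz}+W'(\tu^*)=0$, $\tu^*(s,\pm\ell)=0$; its first integral $\tfrac12|\tu^*_z|^2-W(\tu^*)$ then vanishes, so $\tilde a^*(s)=\tilde b^*(s)$ pointwise, and $\int_{\Omega_1^\ell}\tu^*\,J\,dsdz$ equals the prescribed rescaled mass $m$. For the single-well $W$ of \qref{W-def} each profile $\tu^*(s,\cdot)$ is a finite superposition of disjoint translates of the unique bilayer bump, the number $N(s)$ of bumps is locally constant and hence constant on the connected $\Gamma$, and $\tilde a^*(s)$ and the $z$-integrated mass at $s$ are both $N$ times the corresponding single-bump quantity; substituting into \qref{def-CD1-Energy} then shows $G_1(\Gamma,\tu^*)$ depends only on $N=m/(m_0|\Gamma|)$, $\Gamma$, and $\eta_1,\eta_2$. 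Thus the supremum collapses to $G_1(\Gamma,\tu^\dagger)$, and the chain closes: $L\geq G_1(\Gamma,\tu^\dagger)=\sup\{G_1(\Gamma,\tu^*):\tu^*\in\cS(\Gamma,\ell)\cap\overline{\{\tu_k\}}^{H^1}\}$.

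The hard part is exactly this rigidity step: promoting ``$L\geq G_1$ for one attained limit'' to ``$L\geq G_1$ for all attained limits'' is not formal, since distinct energy-converging subsequences could in principle converge to bilayer profiles of differing energy, and excluding this draws on the canonical single-well shape of $W$, connectedness of $\Gamma$, and the fixed rescaled mass. Absent those structural inputs, the robust conclusion is the weaker one that $\liminf_{k\to\infty}\tF_{\vep_k}^\ell(\tu_k)$ is at least the codimension-one energy of \emph{some} attainable bilayer profile, with the equality of all such energies being precisely what turns this into the stated supremum.
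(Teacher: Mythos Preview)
Your analysis is more careful than the paper's. The paper's proof is two lines: it says the set $\cS(\Gamma,\ell)\cap\overline{\{\tu_k\}}^{H^1}$ is nonempty by Theorem~\ref{sec3-T1}, and then asserts that by Theorem~\ref{sec3-T2}, for \emph{each} $\tu^*$ in this set the value $G_1(\Gamma,\tu^*)$ is a lower bound for $\liminf_k \tF_{\vep_k}^\ell(\tu_k)$. You have correctly spotted that this second assertion does not follow directly from Theorem~\ref{sec3-T2} as stated: that theorem only gives $\liminf_j \tF_{\vep_{k_j}}^\ell(\tu_{k_j})\geq G_1(\Gamma,\tu^*)$ along the particular subsequence converging to $\tu^*$, and the $\liminf$ along a subsequence dominates (not is dominated by) the $\liminf$ of the full sequence. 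So the inequality goes the wrong way for a direct transfer. The paper simply does not address this point; your second and third paragraphs articulate exactly the missing step.

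Your proposed repair---arguing that all attainable bilayer limits carry the same $G_1$ value, so that the sup collapses---is a genuine addition beyond the paper's argument. It is plausible in spirit but rests on more than is established: you invoke that each $\tu^*(s,\cdot)$ is a strong solution of the ODE and hence a finite superposition of the canonical bump, and that the bump count $N(s)$ is locally constant on $\Gamma$. The paper itself only draws those structural conclusions under the additional hypothesis $\tu^*\in H^2_0(\Omega_1^\ell)$ (see Section~4.2.1), not for general $H^1$ weak limits; and your appeal to connectedness of $\Gamma$ and to the particular shape of $W$ in \qref{W-def} are extra assumptions not present in the Corollary's hypotheses. So your fix is stronger than what the paper claims to need, and its key regularity step is not fully justified. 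In short: you have identified a real gap that the paper's two-line proof glosses over, and your workaround is a reasonable strategy but imports structural hypotheses that go beyond both the Corollary's statement and what Theorems~\ref{sec3-T1} and~\ref{sec3-T2} provide.
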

\begin{proof}
By Theorem 3.1 the set $\cS(\Gamma,\ell)\cap \overline{\{\tu_k\}}^{H^1}$ is not empty and by Theorem 4.1 for each $\tu_*$ in this set, 
the value $G_1(\Gamma,\tu_*)$ is a lower bound for $\liminf \tF_{\vep_k}^\ell(\tu_k).$
\end{proof}


\subsection{Codimension-one upper bounds}

For a given codimension one interface, $\Gamma$, upper bounds on the limiting energy of codimension-one $\ell$-bounded sequences 
can be obtained for specific examples. We present a sequence which satisfies the enhanced bounds \qref{T1-B2} and \qref{T2-B3}, and
whose codimension one energy provides a sharp lower bound. We also present a codimension-one $\ell$-bounded sequence which 
does not satisfy the enhanced bounds, has no subsequences which are weakly convergent in $H^1$, yet nonetheless has a
limiting energy that may be higher or lower than the codimension-one energy, particularly if the curvatures of $\Gamma$ are sufficiently
large.

 \subsubsection{Sharp codimension-one energy}
In light of Corollary 3.3, to construct sharp bounds it seems meritorious to reduce the size of the set $\cS(\Gamma,\ell).$  If the $H^1$ weak
closure of the sequence is also a $H^2$ weak closure, then the limiting bilayer equation solution will reside in $H^2_0(\Omega_1^\ell).$ 
The additional regularity implies that $\tu^*$ is a strong solution of the bilayer equation and hence is comprised of $N$,
$s$-dependent curves of translates of the bilayer profile, $U_*^1\in H^2(\R)$, defined as the unique single-pulse solution of
\qref{profile-eq1} that is symmetric about $z=0.$ The profile $U_*^1$ has compact support, denoted $[-L, L]$, and remark 
that $N$ must satisfy $NL\leq \ell$.

Indeed, for $\tu^*\in \cS(\Gamma,\ell)\cap H^2_0(\Omega_1^\ell)$, then the extension 
\begin{align} \label{assumption-extension}
	\tu_{\rm ext}^* := \left\{ 
		\begin{array}{ll}
			\tu^* & \mbox{ if }x\in \Omega_1^\ell, \\
			0 &\mbox{ otherwise}
		\end{array}
		\right.
\end{align}
 belongs to $H^2(\R^n)$ and
and  $\tu^*$ is a strong solution of the bilayer equation
\begin{align} \label{profile-eq1}
	\disp  -\frac{ \partial^2\tu^*}{\partial z^2} + W'(\tu^*)=0,\qquad u^*(\pm l)=0,
	\qquad \frac{\partial\tu^*}{\partial z} (\pm l) =0,\
\end{align}
This equation has a unique solution	
on each whisker $W(s):=\{x\in\Omega_1^\ell \bigl | \, \phi(x)=s\}.$
By standard dynamical systems techniques we find that $\tu^*$ is a superposition
of at most $N$ compactly supported single-pulse bilayer solutions 
\beq
 \tu_* = \sum\limits_{k=1}^N U_*^1(z-p_k(s)), 
 \eeq
 where the translates $p_k:\Gamma\mapsto\R$ are sufficiently far apart that their supports are disjoint and avoid the boundary.
 More specifically this is achieved if require that  $p_k<p_{k+1}+2L$, $p_1>-\ell+L$, $p_N<\ell-L$, uniformly over $s\in\Gamma.$ 
 Since $u_*\in H^2$ we deduce
 that each $p_k\in H^2(\Gamma)$. The precise value of $N$ depends upon the choice of $\ell$ and 
 the value of  the mass constraint. Pulling $\tu_*$ back to its unscaled version $u_*$, we see that total mass can 
 only take discrete values at leading order
 \beq
 \int_{\Omega} u_* dx =\vep\int_{\Omega_1^\ell} \tu_* \,Jdsdz= \eps |\Gamma| N \int_{-L}^L U_*^1(z)\, dz +O(\vep^2).
 \eeq
 Due to the rescaling, the translates have no impact on the mass in the limit as $\vep\to0.$

For fixed $\Gamma$ and $\ell$ sufficiently large we may tune value of the total mass, so that $N=1$. 
The $H^2$ solution set then reduces to the translates of the single pulse, which we denote by $\tu^1,$
and for any sequence $\{\vep_k\}$ we construct the corresponding sequence $\{\tu_k\}$ which agree trivially
with $\tu^1$ for each $k$, and their un-scaled forms $\{u_k\}.$ This is a codimension-one $\ell$-bounded sequence
that satisfies \qref{T1-B2} and \qref{T2-B3}. To evaluate its energy we rewrite (\ref{def-tF^ell}) as
\begin{align} \label{kF} 
\tF_\vep^\ell (\tu_k) &:= \int_{\Omega_1^\ell}\Bigl[ \frac{1}{2}\left( \vep^{-1}\left(-\partial_z^2 \tu^1+W'(\tu^1)\right)-H\partial_z \tu^1  
	-\vep \Delta_s \tu^!\right)^2 +\nonumber\\
       & \hspace{0.5in}-\frac{\eta_1}{2}\left(\vep^2 |\nabla_s \tu^1|^2 +|\partial_z \tu^1|^2\right) -\eta_2 W(\tu^1)\Bigr] 
       \textrm{ds}\,\textrm{dz}.
\end{align}
 A simple  calculation shows that
\begin{align}\label{kE1}
\lim\limits_{k\to\infty} \tF_{\vep_k}^\ell(\tu_k) =  \int_\Gamma \left( a_*^1 H_0^2(s) 
- (\eta_1+\eta_2) b_*^1\right)\textrm{ds}=G_1(\Gamma,U_*^1),
\end{align}
where $a_*^1=b_*^1>0$ are the corresponding constants
\begin{align} 
a_*^1 &= \frac12\int_{-L}^L  \left| \frac{\partial U_*^1}{\partial z}\right|^2\,dz, 
\qquad b_*^1=\int_{-L}^L W(U_*^1)dz.
\end{align}

\subsubsection{codimension-one sequence without codimension one energy} $\mbox{ }$
Pearling is a bifurcation characterized by 
rapid tangential oscillations in bilayer thickness. The existence of pearled solutions as critical points of the
FCH free energy has been established in a weakly nonlinear setting for a smooth well $W$, \cite{PW-15}. 
However this construction requires analysis of the linearization about the underlying bilayer solution and does not immediately 
extend to the case of a non-smooth well considered here.  However in the strongly nonlinear setting pearled solutions
may reduce to disconnected micelles, that is, codimension $n$ balls. These are radial solutions whose profile $U_*^n$ solves
\begin{align}
 \label{codim-n}
 \partial_R^2U_*^n + \frac{n-1}{R}\partial_R U_*^n = W'(U_*^n),
 \end{align}
 where $R$ is the $\varepsilon$ scaled distance to a center point. The codimension $n$ profile $U_*^n$ 
 satisfies $\partial_RU_*^n(0)=0$ and has support
contained within $R\in[0,R_0]$. For a fixed codimension one interface $\Gamma$, we form a codimension-one
$\ell$ bounded sequence with $\ell>R_0$. Taking $\{\vep_j\}$ tending to zero as $j\to\infty$ and for each $j$ identify 
$N_j$ points $\{x_{j,k}\}_{k=1}^{N_j}$  on $\Gamma$ whose separation is greater than $\vep_j R_0$. Since $\Gamma$ is far from self intersection there exists $\alpha_0>0$ sufficiently small such that for each $\alpha\in(0,\alpha_0)$ we may choose the points so that
 $N_j \sim\alpha \eps_j^{1-n}$. We form the sequence $\{w_j\}$ according to the formula
\begin{align}
 w_j(x) = \sum_{k=1}^{N_{j}} U_*^n\left(\frac{|x-x_{j,k}|}{\vep_j}\right).
 \end{align}

 To evaluate the energy we first consider, $U_*^n$. Multilplying (\ref{codim-n})
 by $\partial_R U_*^n$, integrating from $s=R$ to $s=\infty$, and using $W(U_*^n(\infty))=W(0)=0$, we obtain
\begin{align}
 \frac12 |\partial_R U_*^n|^2 - (n-1)\int\limits_R^\infty \frac{1}{s} |\partial_R U_*^n(s)|^2\,ds = W(U_*^n).
 \end{align}
Multiplying this expression by $R^{n-1}$ and integrating over  the region $R=0$ to $R=\infty$ yields
\begin{align}
\int\limits_0^\infty W(U_*^n)R^{n-1}\, dR = \int\limits_0^\infty\left[ \frac{R^{n-1}}{2} |\partial_R U_*^n|^2 - 
\frac{n-1}{n}\partial_R\left(R^n\right)\int\limits_R^\infty \frac{1}{s}|\partial_R U_*^n(s)|^2\, ds
\right] dR.
\end{align}
The integral over $s$ is identically zero for $R$ beyond the support of $U_*^n$, while it is bounded as 
$R\rightarrow0.$ We may integrate
by parts on the second term on the right hand side, obtaining
\begin{align}
  \int\limits_0^\infty W(U_*^n)R^{n-1}\, dR = \frac{2-n}{2n} \int\limits_0^\infty |\partial_R U_*^n|^2 R^{n-1}\,dR 
  = \frac{2-n}{2n}\sigma_n,
  \end{align}
  where we have introduced the codimension-$n$ surface tension 
  \beq\label{def-sigma-n}
  \sigma_n:=  \int_0^\infty |\partial_R U_*^n|^2R^{n-1}dR.
  \eeq
With these results it is easy to see that
 \begin{align}
 F_\epsilon(U_*^n) =  -\epsilon^{n-1} \left(\frac{\eta_1}{2}+\frac{2-n}{2n}\eta_2\right)\sigma_n,
 \end{align}
and hence 
 \begin{align}
 \lim\limits_{j\to\infty} \tF_\vep^\ell(w_j) =  -\alpha \left(\frac{\eta_1}{2}+\frac{2-n}{2n}\eta_2\right)\sigma_n.
 \end{align}
 Moreover there exists a constant $c>0$ such that 
 \begin{align}
  \left\| w_j\right\|_{L^2(\Omega_b)}&\geq c,\nn\\
   \left\| \nabla_s w_j\right\|_{L^2(\Omega_b)}& \geq c \vep_j^{-1}, \nn\\
   \left\| \Delta_s w_j\right\|_{L^2(\Omega_b)} & \geq c \vep_j^{-2},
 \end{align}
 for all $j\geq 1.$ The codimension-one $\ell$-bounded sequence satisfies \qref{T1-B1} but does not satisfy the enhanced bounds
\qref{T1-B2} nor \qref{T2-B3}. In particular it is straightforward to choose the points $\{x_{j,k}\}$ so that no subsequence 
of $\{w_j\}_{j=1}^\infty$ converges strongly in $L^2(\Omega_b)$.

\section{Discussion}

We have shown that sequences of FCH-energy bounded functions whose support converges to the same codimension one interface, $\Gamma$, may have fundamentally different
structure and that their limiting energy can target different features of the underlying interface. Consequently,  the relative size of the corresponding limiting energy 
can be exchanged under subtle changes in the parameters in the FCH energy. Specifically,
the bilayer sequence $\{\tu_k\}$ constructed in section 4.2.1 and the micelle sequence $\{\tw_k\}$ constructed in section 4.2.2 
are both codimension-one $\ell$-bounded for any codimension-one interface $\Gamma$ satisfying the curvature bounds \qref{Gamma-bounds}. 
If $\eta_2=-\eta_1<0$, then the energy of the bilayer sequence converges to a positive number. 
Indeed from Corollary 4.2, any codimension-one $\ell$-bounded sequence that satisfies
the enhanced tangential derivative bounds \qref{T1-B2} nor \qref{T2-B3} has its energy bounded below by a positive lower bound, 
since $G_1(\Gamma,\tu^*)>0$ for all $\tu\in\cS(\Gamma,\ell)$ when $\eta_2<-\eta_1.$ The energy of the 
micelle based sequence $\{\tw_k\}$ has a negative limit, $-\alpha(1-\frac{1}{n})\eta_1\sigma_n$ in space dimension $n\geq 2.$
Conversely, if $\eta_2>\frac{n}{n-2}\eta_1>0$, then the micelle sequence has a positive energy limit while the bilayer sequence has a negative
energy limit if the curvatures of $\Gamma_0$ are sufficiently small.   

The attempt to obtain well defined lower bounds to free energy by restricting the support of $u$ to lie in a thin codimension-one
domain is frustrated by the fact that the FCH free energy supports higher codimensional structures that can be naturally embedded within
a codimension one domain. However, we have shown that for sequences whose tangential variation is sufficiently tame and whose
$H^1$ weak limits lies in $H^2(\Omega_1^\ell)$, then the possible limit set corresponds to $N$ translates of a bilayer, with
a corresponding limiting energy. It is natural to extend this analysis to restrict the support of $u$ to lie in a thin neighborhood of a
codimension-$m$ domain, including codimension two filamentous pores in $\R^3.$ 
It is also important to characterize defect structures such as triple junctions, and open edges. 

\bibliographystyle{plain}
\bibliography{dai-bib}

\bigskip

{\it E-mail address:} sdai4@ua.edu  

{\it E-mail address:} PROMISLO@msu.edu

\end{document}